\newtheorem{theorem}{Theorem}[section]
\newtheorem{lemma}[theorem]{Lemma}
\theoremstyle{definition}
\newtheorem{defi}[theorem]{Definition}
\def \Surj {\operatorname{Sur}}
\def \Aut {\operatorname{Aut}}
\def \Hom {\operatorname{Hom}}
\def \Gal {\operatorname{Gal}}
\def \Out {\operatorname{Out}}
\def \Ext {\operatorname{Ext}}
\begin{document}

\author{Will Sawin}
\title[identifying measures by their moments]{Identifying measures on non-abelian groups and modules by their moments via reduction to a local problem}
\address{Department of Mathematics, Columbia University, New York, NY, USA}
\email{sawin@math.columbia.edu}

\maketitle

\begin{abstract} Work on generalizations of the Cohen-Lenstra \cite{CL} and Cohen-Martinet \cite{CM} heuristics has drawn attention to probability measures on the space of isomorphism classes of profinite groups. As is common in probability theory, it would be desirable to know that these measures are determined by their moments, which in this context are the expected number of surjections to a fixed finite group. We show a wide class of measures, including those appearing in a recent paper of Liu, Wood, and Zurieck-Brown \cite{LWZ}, have this property. The method is to work ``locally" with groups that are extensions of a fixed group by a product of finite simple groups. This eventually reduces the problem to the case of powers of a fixed finite simple group, which can be handled by a simple explicit calculation. We can also prove a similar theorem for random modules over an algebra.

\end{abstract} 

\section{Introduction}

The primary application of the methods of this paper is a new result in function field number theory, which builds heavily on prior work in~\cite{LWZ}. Thus, we begin by reviewing some notation from~\cite{LWZ}. Let $\Gamma$ be a finite group. A $\Gamma$-group is a profinite group with a continuous action of $\Gamma$. 

Let $\mathbb F_q$ be a finite field of order $q$ prime to $|\Gamma|$. A totally real $\Gamma$-extension $K/ \mathbb F_q(t)$ is a Galois extension $K/\mathbb F_q(t)$, totally split over $\infty$, together with an isomorphism $\operatorname{Gal}(K/\mathbb F_q(t)) \cong \Gamma$. For such a $K$, define $K^{\#}$ to be the maximal everywhere unramified extension of $K$ that is totally split over $\infty$ and of order relatively prime to $q(q-1)|\Gamma|$. Then $\Gal (K^\#/K)$ is a $\Gamma$-group, with $\Gamma$ acting by conjugation~\cite[Definition 2.1]{LWZ}.

Let $n_K$ be the sum of the degrees of the primes in $\mathbb F_q(t)$ that ramify in $K$, and let $E_\Gamma(d,q) $ be the set of totally real $\Gamma$-extensions $K/ \mathbb F_q(t)$ with $n_K=d$. 

We would like to study the distribution of $\Gal (K^\#/K)$ as a $\Gamma$-group. (This provides a model for the distribution of the Galois groups of the maximal unramified extensions of totally real $\Gamma$-extensions of $\mathbb Q$ - see \cite{LWZ} for more on this.) To do this, following~\cite{LWZ}, we consider quotients of $\Gal (K^\#/K)$ that embed as a subquotient into a product of finite groups from a fixed list. This is the analogue of studying the distribution of the class group by first studying the distribution of its $n$-torsion part for fixed $n$ - it simplifies the structure of the individual groups under consideration and prevents escape of mass.

For $\mathcal C$ a finite set of $\Gamma$-groups, we say a finite $\Gamma$-group is level-$\mathcal C$ if it is a $\Gamma$-invariant quotient of a $\Gamma$-invariant subgroup of a product of $\Gamma$-groups in $\mathcal C$. For $G$ a $\Gamma$-group, we say $G^{\mathcal C}$ is the inverse limit of all level-$\mathcal C$ quotients of $G$. 

Let $|\mathcal C|$ be the least common multiple of the orders of the elements of $\mathcal C$. 

Our main application calculates the probability that $ \Gal(K^{\#} /K)^{\mathcal C} $ is a given finite level-$\mathcal C$ $\Gamma$-group $H$, in the limit as $q$ goes to $\infty$ first and $d$ goes to $\infty$ second, subject to congruence conditions on $q$. (This is generally the easiest kind of limit studied in number theory over function fields.)
 
 \begin{theorem}\label{main-function-field-theorem} Let $\mathcal C$ be a finite set of $\Gamma$-groups and let $H$ be a finite level-$\mathcal C$ $\Gamma$-group.  Assume $\gcd( |\Gamma|, |\mathcal C|)=1$. Then
 
 \[ \liminf_{n \to \infty} \liminf_{\substack{ q \to \infty \\ \gcd(q, |\Gamma| |\mathcal C|)=1 \\ \gcd(q-1, |\mathcal C|)=1 }} \frac{ \sum_{ d =0}^{n}   \left|\left \{ K \in E_{\Gamma}(d,q)\mid  \Gal(K^{\#} /K)^{\mathcal C} \cong H \right\} \right| } { \sum_{ d=0}^{n} \left|E_{\Gamma}(d,q)\right| } =\]
 
 \[ \limsup_{n \to \infty} \limsup_{\substack{ q \to \infty \\ \gcd(q, |\Gamma| |\mathcal C|)=1 \\ \gcd(q-1, |\mathcal C|)=1 }} \frac{ \sum_{ d =0}^{n}   \left|\left \{ K \in E_{\Gamma}(d,q)\mid  \Gal(K^{\#} /K)^{\mathcal C} \cong H \right\} \right| } { \sum_{ d=0}^{n} \left|E_{\Gamma}(d,q)\right| } =  \mu_{\Gamma} (U_{\mathcal C, H}) \]
 where $\mu_{\Gamma}(U_{\mathcal C,H})$ is defined in~\cite[(3.15)]{LWZ} and given by an explicit formula in~\cite[Theorem 5.14]{LWZ} (taking $u=1$ in both cases).

 \end{theorem}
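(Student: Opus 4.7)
The plan is to combine the moment convergence established in~\cite{LWZ} with the moment-determinacy theorem that is the main contribution of the present paper. Under the hypotheses $\gcd(|\Gamma|,|\mathcal C|)=1$ and $\gcd(q-1,|\mathcal C|)=1$, \cite{LWZ} shows that for every finite level-$\mathcal C$ $\Gamma$-group $G$ the average of $|\Surj(\Gal(K^\#/K), G)|$ over $K \in E_\Gamma(d,q)$ with $d \le n$ converges, as $q \to \infty$ and then $n \to \infty$, to the moment $\int |\Surj(-,G)| \, d\mu_\Gamma$. Thus the moments of the empirical measures, pushed forward to isomorphism classes of level-$\mathcal C$ quotients, converge to those of the corresponding pushforward of $\mu_\Gamma$.

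To pass from moment convergence to convergence of probabilities of individual isomorphism classes, I would invoke the moment-determinacy theorem of this paper, applied to measures on isomorphism classes of finite level-$\mathcal C$ $\Gamma$-groups. Combined with the explicit moment formula of~\cite[Theorem 5.14]{LWZ}, determinacy identifies the limit of the empirical measures with $\mu_\Gamma$ on level-$\mathcal C$ groups, and in particular the probability that $\Gal(K^\#/K)^{\mathcal C} \cong H$ converges to $\mu_\Gamma(U_{\mathcal C, H})$. A technical point is to check that the form of moment convergence provided by \cite{LWZ} is strong enough to trigger the determinacy theorem; this should reduce to a tightness or tail estimate on moment growth, which itself follows from the LWZ bounds.

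The main obstacle is the moment-determinacy result itself, and its proof is the core technical content of the paper. Following the strategy advertised in the abstract, one proceeds by induction on the ``layers'' of the level-$\mathcal C$ structure: any finite level-$\mathcal C$ $\Gamma$-group $H$ fits in a short exact sequence $1 \to N \to H \to \bar H \to 1$ in which $N$ is a product of copies of a single finite simple $\Gamma$-group $S$ and $\bar H$ is a smaller level-$\mathcal C$ group. Assuming the determinacy result for measures supported on groups of $\bar H$'s type, identifying a measure on level-$\mathcal C$ groups is reduced, conditional on $\bar H$, to identifying a measure on extensions of $\bar H$ by products of the fixed simple group $S$. Iterating peels off layers down to the base case of a measure on groups of the form $S^n$, where $\Gamma$-equivariant surjections between powers of $S$ admit an explicit description and the moment map becomes an upper-triangular linear transformation that can be inverted directly. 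The main difficulty at the inductive step is organizing the conditional decomposition in a way that the relevant ``extension by $S^n$'' problems remain within the reach of the local calculation, which is why the paper emphasizes working with extensions of a fixed group by products of finite simple groups.
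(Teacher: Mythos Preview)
Your deduction of Theorem~\ref{main-function-field-theorem} from the moment-determinacy result matches the paper: define empirical measures $\mu_{q,n}$, use \cite[Theorem~1.4]{LWZ} for moment convergence, verify the growth bound via \cite[Theorem~6.2]{LWZ} (which gives $\int\Surj_\Gamma(X,H)\,d\mu=1/[H:H^\Gamma]=O(|H|^{O(1)})$), and apply Theorem~\ref{main-bound-group}.

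Your sketch of the determinacy proof itself, however, differs from the paper in two ways worth noting. First, the paper does not induct on a composition series of $H$. For a fixed $H$, it passes in a single step to a ``localized'' measure $\mu^H$ on $[H']$-groups (where $H'=H\rtimes\Gamma$ acts by \emph{outer} automorphisms): for each surjection $\pi\colon X\to H$ one records $Q(\ker\pi)$, the quotient of $\ker\pi$ by the intersection of its maximal proper $H'$-invariant normal subgroups. This is always a product $\prod_i G_i^{e_i}$ of finite simple $[H']$-groups (not $\Gamma$-groups), and since $\mu^H(1)=|\Aut(H)|\,\mu(H)$ one only needs to recover $\mu^H(1)$. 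The induction in the paper then runs over the index $i$ in $\prod_i G_i^{e_i}$, eliminating one simple factor at a time (Lemmas~\ref{inductive-step} and~\ref{flat-case}), not over layers of $H$.

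Second, your base case---``the moment map becomes an upper-triangular linear transformation that can be inverted directly''---hides the main analytic difficulty. Formal inversion yields an infinite series, and one must justify interchanging it with the limit in $t$. The paper's device (Lemma~\ref{c-k-exists}) is to exhibit coefficients $c_k$ with $c_0=1$, superexponential decay, and the crucial sign property that the partial sums $\sum_{k=0}^r c_k\,\Surj_{[H']}(G^e,G^k)$ are $\ge 0$ for $r$ even and $\le 0$ for $r$ odd whenever $e>0$. This sandwich structure is what converts the moment hypotheses into genuine $\limsup$/$\liminf$ bounds on $m_t(0)$ (Lemma~\ref{one-variable}); triangularity alone does not give this.
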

 
 Notably, it follows from \cite[Theorem 5.12]{LWZ} that \[\sum_{ \substack{ H \textrm{ a level }\mathcal C\textrm{ }\Gamma\textrm{group} }} \mu_{\Gamma} ( U_{\mathcal C,H})=1.\] In other words, there is no escape of mass in this limit. This is the main reason that we needed to consider the level-$\mathcal C$ quotients of the Galois group. (We could equivalently define, as \cite{LWZ} does, a topological space with topology generated by the sets of $G$ such that $G^{\mathcal C}= H$ for all pairs $\mathcal C,H$ and then obtain a statement on convergence of Borel probability measures on this topological space, but since our arguments proceed entirely with level-$\mathcal C$ $\Gamma$-groups, we avoid this.)
 
 This verifies the function field case of~\cite[Conjecture 1.3]{LWZ} with an additional $q \to \infty$ limit and separation into $\limsup$ and $\liminf$. 
 
 To prove this, we use the analogous limiting statement~\cite[Theorem 1.4]{LWZ} for the moments of this distribution, in other words for the sums \[ \frac{ \sum_{ d =0}^{n}   \sum_{ K \in E_{\Gamma}(d,q) } \Surj_\Gamma ( \Gal(K^{\#} /K)^\mathcal C,  H)  } { \sum_{ d=0}^{n} \left|\left \{ E_{\Gamma}(d,q)\right\}\right | } \] where $\Surj_\Gamma$ denotes the number of $\Gamma$-equivariant surjections between two $\Gamma$-groups.
 
 To make this deduction, we need to know that the probability distribution assigning measure $\mu_{\Gamma} (U_{\mathcal C, H})$ to $H$ is determined by its moments. In fact, we prove a statement involving a much more general class of measures on the set of isomorphism classes of finite level-$\mathcal C$ $\Gamma$-groups. (Whenever we discuss a measure on a set, we take that set to have the discrete topology unless otherwise specified.)

 \begin{theorem}[Theorem \ref{main-bound-group}] \label{main-bound-group-intro} Let $\Gamma$ be a finite group. Let $\mathcal C$ be a finite set of finite $\Gamma$-groups. Assume $\gcd(|\Gamma|,|\mathcal C|)=1$. Let $\mu$ be a measure on the set of isomorphism classes of finite level-$\mathcal C$ $\Gamma$-groups. Let $\mu_t$ be a sequence of measures on the same set. Assume that, for each finite level-$\mathcal C$ $\Gamma$-group $H$, we have
\begin{equation*}\label{assumption-limit-intro} \lim_{t \to \infty} \int \Surj_\Gamma (X, H) d\mu_t(X)  = \int \Surj_\Gamma(X, H) d\mu(X) .\end{equation*}
and
\begin{equation*}\label{assumption-bound-intro} \int \Surj_\Gamma(X, H) d\mu(X)  = O (   |H|^{O(1) }) \end{equation*}
Then for every finite level-$\mathcal C$ $\Gamma$-group $H$ we have
\[ \lim_{t \to \infty} \mu_t(H) = \mu(H) .\]

\end{theorem}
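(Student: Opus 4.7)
My plan is to construct an absolutely convergent Möbius-type inversion formula
\[
\mu(H) = \sum_{H'} c_{H,H'} \int \Surj_\Gamma(X, H') \, d\mu(X),
\]
where $H'$ ranges over finite level-$\mathcal{C}$ $\Gamma$-groups and the universal coefficients $c_{H,H'}$ (independent of $\mu$) satisfy $\sum_{H'} |c_{H,H'}| \, |H'|^k < \infty$ for every $k$. Combined with the polynomial moment bound on $\mu$, dominated convergence then lets me pass the termwise limits $\int \Surj_\Gamma(X,H')d\mu_t \to \int \Surj_\Gamma(X,H')d\mu$ across the sum, yielding $\mu_t(H) \to \mu(H)$.

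To construct the formula I would induct on the length of a $\Gamma$-chief series of $H$. At the inductive step, pick a minimal $\Gamma$-invariant normal subgroup $N \trianglelefteq H$; by standard structure theory $N \cong S^k$ for some finite simple $\Gamma$-group $S$ whose order divides $|\mathcal C|$. Setting $\bar H := H/N$, the inductive hypothesis handles $\bar H$ and more generally information determined over $\bar H$. What remains is a \emph{local problem}: for measures on extensions of $\bar H$ by powers of $S$, invert the moments counting $\Gamma$-equivariant surjections onto each such extension. The coprimality hypothesis $\gcd(|\Gamma|,|\mathcal C|)=1$ is essential here, since it makes Schur--Zassenhaus-type splittings available, giving a clean description of such extensions and of their $\Gamma$-equivariant automorphism groups, and reducing the local problem to an inversion on the lattice of $\Gamma$-stable subgroups of a power of $S$.

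The heart of the argument is then the explicit inversion for powers of a single simple $\Gamma$-group: a generalization of the $q$-binomial Möbius inversion familiar from the abelian Cohen--Lenstra setting. One writes the inverse matrix out explicitly and observes super-exponential decay of its entries in $|H'|$, comfortably dominating any polynomial growth of the moments. Iterating the local inversion once per layer of the chief series of $H$ assembles the formula for general $H$, with each layer preserving the required decay.

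The main obstacle I anticipate is in the local step: precisely encoding the extension data of $H$ over $\bar H$, carefully tracking $\Aut_\Gamma$-orbits to translate between $\Surj_\Gamma$-counts and isomorphism-class probabilities, and verifying that the super-exponential decay survives composition with the inductively-constructed inversion formula for $\bar H$. A related subtlety is that different layers of the chief series may involve different simple $\Gamma$-groups; coprimality and the independence built into the local problem should let me treat each layer separately and then glue via the induction.
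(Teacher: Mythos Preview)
There is a genuine gap in how you pass to the limit. You propose $\mu(H)=\sum_{H'} c_{H,H'}\int\Surj_\Gamma(X,H')\,d\mu(X)$ with super-exponential decay of $c_{H,H'}$, and then dominated convergence. But the hypotheses give a polynomial moment bound only for $\mu$, not for the $\mu_t$. Without any growth control on the $\mu_t$, (i) your inversion series need not even converge when applied to $\mu_t$, so you cannot identify $\sum_{H'} c_{H,H'}\int\Surj_\Gamma(X,H')\,d\mu_t$ with $\mu_t(H)$; and (ii) dominated convergence is unavailable, since the terms $|c_{H,H'}|\int\Surj_\Gamma(X,H')\,d\mu_t$ have no $t$-uniform summable majorant. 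The theorem is deliberately stated with no growth assumption on the $\mu_t$ beyond convergence of each fixed moment, so this is not a technicality you can wave away.

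The paper circumvents this by using a \emph{sign structure} rather than absolute convergence. In the local problem (measures on powers $G^e$ of a single finite simple object $G$) it chooses $c_k$ so that the \emph{partial} sums $\sum_{k=0}^{r} c_k \Surj(G^e,G^k)$ are $\ge 0$ for every $e>0$ when $r$ is even and $\le 0$ when $r$ is odd, via $\sum_{k\le r}(-1)^k\binom{e}{k}=(-1)^r\binom{e-1}{r}$ in the non-abelian case and its $q$-analogue in the abelian case. This yields, for every $t$ and every $r$, an inequality between $\mu_t$ at the trivial group and a \emph{finite} linear combination of moments of $\mu_t$, to which the termwise moment convergence applies directly; sending $r\to\infty$ along even and odd values squeezes $\limsup$ and $\liminf$ of $\mu_t$ to $\mu$. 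Two further structural contrasts are worth noting. First, the paper never uses $\gcd(|\Gamma|,|\mathcal C|)=1$ or Schur--Zassenhaus: instead of a splitting it equips $\ker\pi$ with the action of $H'=H\rtimes\Gamma$ by \emph{outer} automorphisms (the $[H']$-group structure), which always exists. Second, rather than inducting along a chief series it makes a single reduction, replacing $\ker\pi$ by its quotient $Q(\ker\pi)$ modulo the intersection of all maximal proper $H'$-invariant normal subgroups; this is already a product $\prod_i G_i^{e_i}$ of finite simple $[H']$-groups, the moments of the pushed-forward measure are finite sums over extensions $1\to\prod_i G_i^{k_i}\to G\to H\to 1$ of the original moments, and the one-variable squeeze is then applied one $G_i$ at a time.
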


This generalizes \cite[Theorem 1.4]{BW}, which proved that a specific measure on $p$-groups is determined by its moments. It also generalizes \cite{WW}, which proved that a measure on abelian $\Gamma$-groups is determined by its moments. (In each case, checking that every measure that agrees with the fixed one on $G^{\mathcal C}$ for all finite sets $\mathcal C$ agrees on the nose is straightforward.) The paper \cite{WW} itself generalizes much earlier work on the abelian case, including \cite{EVW}. 

 It is not possible to prove a version of Theorem~\ref{main-function-field-theorem} with a limit of the form $\lim_{n \to \infty} \lim_{q\to \infty}$ using only the moment statement~\cite[Theorem 1.4]{LWZ}  since if $\mu_1, \mu_2$ are two distinct measures on the set of isomorphism classes of profinite $\Gamma$-groups that happen to have the same moments, then the measure depending on $q$ and $n$ given by $\begin{cases} (1-1/n) \mu_\Gamma + (1/n) \mu_1 & \textrm{if }q \equiv 1\bmod 4 \\ (1-1/n) \mu_\Gamma + (1/n) \mu_2 & \textrm{if }q \equiv 3\bmod 4\end{cases} $ has moments which converge to the moments of $\mu_\Gamma$ in the $\lim_{n\to\infty} \lim_{q\to\infty}$ sense but does not itself converge in the $\lim_{n\to\infty} \lim_{q\to\infty}$ sense since it does not even converge in the $\lim_{q\to\infty}$ sense for any particular value of $q$.  However, the statement with $\limsup$s and $\liminf$s should suffice for most practical purposes.

The idea of the proof of Theorem~\ref{main-bound-group} is, to determine the probability of a random group being a fixed group $H$, we zoom in to a measure supported on extensions of $H$, then zoom in further to extensions of $H$ by products of finite simple groups, then forget the extension data and only look at the product of simple groups. This reduces the measure-from-moments problem to a simpler problem for a measure on products of finite simple groups. Moreover, it suffices to calculate the measure of the group $1$. This we can do by an explicit inclusion-exclusion using $q$-binomial series identities.

This strategy is a more general analogue of that used in~\cite{HB-i} to deduce \cite[Lemma 18]{HB-i} from \cite[Lemma 17]{HB-i}.

We expect a similar strategy, that involves first passing to a measure supported on extensions of $H$, then further to a measure supported on extensions of $H$ by abelian groups, to be able to answer questions raised in \cite[p. 6]{LWZ} on whether the measure $\mu_{\Gamma}$ is supported on groups whose finite index subgroups have finite abelianization.

\vspace{10pt}

We can also prove the analogous statement for finite modules over an algebra. (Jacob Tsimerman alerted me to the importance of this case.) For $R$ a ring and $X, M$ two $R$-modules, let $\Surj_R(X,M)$ denote the number of surjective $R$-module homomorphisms from $X$ to $M$.
 
 \begin{theorem}[Theorem \ref{main-bound-algebra}] \label{main-bound-algebra-intro} Let $R$ be an associative algebra. Assume that there are finitely many isomorphism classes of finite simple $R$-modules, and that $\operatorname{Ext}^1_R$ between two finite $R$-modules is finite. (e.g. $R$ could be finite over $\mathbb Z_p$ for some prime $p$.
 
 Let $\mu$ be a measure on the set of isomorphism classes of finite $R$-modules. Let $\mu_t$ be a sequence of measures on the same set. Assume that, for $M$ a finite $R$-module,
 \begin{equation}\lim_{t \to \infty} \int \Surj_R (X, M) d\mu_t(X)  = \int \Surj_R(X, M) d\mu(X) .\end{equation}
and
\begin{equation} \int \Surj_R(X, M) d\mu(X)  = O (   |M|^{O(1) }) \end{equation}
Then for all finite $R$-modules $M$ we have
\[ \lim_{t \to \infty} \mu_t(M) = \mu(M) .\]

\end{theorem}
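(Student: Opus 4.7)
The plan is to mirror the strategy described in the paper for Theorem \ref{main-bound-group-intro}, with the module case being cleaner because the category is abelian, $\Ext^1_R$ between finite modules is finite by hypothesis, and every finite $R$-module $X$ has a canonical maximal semisimple quotient $X^{\mathrm{ss}}$ decomposing uniquely as a direct sum $\bigoplus_i S_i^{n_i}$ over the finitely many isomorphism classes of simple modules $S_1,\dots,S_r$. The first step fixes a finite $R$-module $M$ and introduces an auxiliary measure $\tilde\nu_M$, obtained from a given measure $\nu$ by sampling $X \sim \nu$ together with a surjection $\phi: X \twoheadrightarrow M$ (weighted by the number of such surjections), and recording $\ker\phi$. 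A direct calculation gives $|\operatorname{Aut}_R(M)| \cdot \nu(M) = \tilde\nu_M(\{0\})$. The moments of $\tilde\nu_M$ against a finite module $N$ expand as a finite sum, over isomorphism classes of extensions $0 \to N \to E \to M \to 0$, of moments of $\nu$ against $E$; finiteness of $\Ext^1_R(M,N)$ keeps this sum finite, so polynomial moment bounds and moment convergence are inherited by $\tilde\nu_M$.

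It now suffices to show that for any measure $\lambda$ on finite $R$-modules with polynomially bounded moments, $\lambda(\{0\})$ is determined by the moments and is continuous under moment convergence. The key is the identity
\begin{equation*}
\lambda(\{0\}) \;=\; \sum_{N \text{ semisimple}} \frac{\prod_i (-1)^{n_i} q_i^{\binom{n_i}{2}}}{|\operatorname{Aut}_R(N)|} \int \Surj_R(X, N)\, d\lambda(X),
\end{equation*}
where the sum runs over isomorphism classes $N \cong \bigoplus_i S_i^{n_i}$ and $q_i = |\operatorname{End}_R(S_i)|$. Moments against semisimple targets factor over simple modules as $\Surj_R(X, \bigoplus_i S_i^{n_i}) = \prod_i \Surj_R(X^{\mathrm{ss}}_i, S_i^{n_i})$, where $X^{\mathrm{ss}}_i$ is the $S_i$-isotypic component, so the identity reduces to $r$ one-variable identities of the form
\begin{equation*}
\sum_{m \geq 0} \frac{(-1)^m q^{\binom{m}{2}}}{|\operatorname{Aut}_R(S^m)|}\,\Surj_R(S^n, S^m) \;=\; \mathbf{1}_{n=0},
\end{equation*}
which follow from $\Surj_R(S^n, S^m) = \prod_{i=0}^{m-1}(q^n - q^i)$ and $|\operatorname{Aut}_R(S^m)| = \prod_{i=0}^{m-1}(q^m - q^i)$ by direct algebraic manipulation; this is the module-theoretic form of the $q$-series identity used in the paper.

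For convergence, the coefficients $\prod_i (-1)^{n_i} q_i^{\binom{n_i}{2}}/|\operatorname{Aut}_R(N)|$ decay superexponentially in the $n_i$ (since $|\operatorname{Aut}_R(S_i^{n_i})|$ grows like $q_i^{n_i^2}$ while the numerator only grows like $q_i^{n_i^2/2}$), while by hypothesis the moments grow only polynomially, so the sum converges absolutely. Dominated convergence then transfers the identity through the $t \to \infty$ limit, and combining with the first step yields $\lim_t \mu_t(M) = \mu(M)$. The main obstacle I expect is the first step: verifying carefully that the auxiliary measure $\tilde\nu_M$ genuinely inherits the polynomial moment bound and moment convergence from $\nu$. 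This requires unpacking the sum over isomorphism classes of extensions, and the assumption that $\Ext^1_R$ between finite modules is finite is precisely what keeps this sum under control; in its absence, the expansion would become an infinite series with no obvious bound. The remaining steps are either explicit algebra or routine analysis.
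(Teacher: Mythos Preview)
Your overall strategy matches the paper's: pass to an auxiliary measure recording kernels of surjections to $M$, reduce to detecting the trivial module via moments against semisimple targets, and use the $q$-series identity over the finite field $\operatorname{End}_R(S_i)$. The one substantive gap is the step where you invoke dominated convergence to pass the $t\to\infty$ limit through the infinite sum over semisimple $N$. The hypotheses give you, for each fixed $N$, convergence of $\int\Surj_R(X,N)\,d\mu_t(X)$; hence each such sequence is bounded, but the bound $\sup_t\int\Surj_R(X,N)\,d\mu_t(X)$ may grow arbitrarily fast in $|N|$. There is no uniform-in-$t$ polynomial moment bound assumed on $\mu_t$, so you have no summable dominating function and dominated convergence does not apply.

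The paper handles this by exploiting the sign pattern of the \emph{partial} sums rather than the absolute convergence of the full series. The $q$-binomial telescoping identity
\[
\sum_{m=0}^{r}(-1)^m\binom{n}{m}_q q^{\binom{m}{2}}=(-1)^r\binom{n-1}{r}_q q^{\binom{r+1}{2}}
\]
shows that the $r$-th truncation of your one-variable sum is $\ge \mathbf{1}_{n=0}$ for $r$ even and $\le \mathbf{1}_{n=0}$ for $r$ odd, for \emph{every} $n$, with no moment bound needed. Integrating against any nonnegative $\lambda$ then gives that the $r$-th partial sum of $\sum_N c_N\int\Surj_R(X,N)\,d\lambda(X)$ bounds $\lambda(\{0\})$ from above or below according to the parity of $r$. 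Since each partial sum is a \emph{finite} linear combination of moments, it commutes with $\lim_{t\to\infty}$, yielding $\limsup_t$ and $\liminf_t$ bounds on $\mu_t^M(\{0\})$ in terms of partial sums for $\mu^M$; only then does one send $r\to\infty$, using the polynomial bound on $\mu$ alone. The multi-variable case is done by induction, peeling off one simple at a time (fixing the remaining exponents and applying the one-variable argument), rather than by a single dominated-convergence step over all tuples $(n_1,\dots,n_r)$. Replacing your dominated-convergence sentence with this alternating-partial-sum argument closes the gap.
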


 I would like to thank Melanie Wood, Jacob Tsimerman, and the anonymous referee for helpful conversions and comments on a previous draft of this paper. While working on this project, I served as a Clay Research Fellow and, while completing it, was supported by NSF grant DMS-2101491.

\section{ $\Gamma$-groups} 

Given a $\Gamma$-group $H$, we will understand those $\Gamma$ groups $G$ that map to $H$ by understanding the kernels of maps $\pi : G \to H$. These kernels carry extra structure, because $H$ and $\Gamma$ both act on them by outer automorphisms, and we will need the following ad-hoc definition to keep track of the extra structure:

\begin{defi} For a $\Gamma$-group $H$, let $H'= H \rtimes \Gamma$. We say an $[H']$-group  is a group $G$ together with a map from $H'$ to $\operatorname{Out}(G)$.\end{defi}

Note that $\Gamma$ also acts on $\ker \pi$ by honest automorphisms instead of outer automorphisms. We ignore this extra structure to simplify the definition and to simplify certain proofs, as it is not necessary for our arguments.

\begin{defi} We say a homomorphism $G_1\to G_2$ of $[H']$-groups is a $[H']$-homomorphism if for each element of $h\in H'$, for each lift $\sigma_1$ of $h$ from $\Out(G_1)$ to $\Aut(G_1)$, there is a lift $\sigma_2$ of $h$ from $\Out(G_2)$ to $\Aut(G_2)$ such that $\sigma_2 \circ f = f \circ \sigma_1$.

We say $\Surj_{[H']} (G_1, G_2)$ is the number of surjective $[H']$-homomorphisms from $G_1$ to $G_2$. \end{defi}

$\Out(G)$ acts on the set of normal subgroups of $G$, so $H'$ acts on the set of normal subgroups of a $[H']$-group $G$. We say a nontrivial $[H']$-group is simple if there are no nontrivial proper fixed points of this action.

It is easy to see that the composition of two $[H']$-homomorphisms is an $[H']$-homomorphism, so $[H']$-groups form a category.

\vspace{10pt} We will calculate the probability that a random $\Gamma$-group $X$ is isomorphic to $H$ by studying the probability that the kernel of random $\Gamma$-surjection $\pi: X\to H$ is trivial. Thus, our first few lemmas will focus on identifying the probability that an $[H']$-group is trivial from the moments of a distribution on $[H']$-groups. By quotienting by the intersection of all maximal proper $H\rtimes \Gamma$-invariant normal subgroups, we will be able to restrict attention to products of finite simple $[H']$-groups. Thus, our first few lemmas will involve finite simple $[H']$-groups and their products.  

\vspace{10pt}

We begin with a series of lemmas that let us calculate $\Surj_{[H']} (G_1, G_2)$ for products of finite simple $[H']$-groups.

\begin{lemma}\label{abelian-surjection-count} Let $G$ be an abelian finite simple $[H']$-group, and let $h$ be the number of $[H']$-homomorphisms from $G$ to $G$. Then
\begin{equation}\label{eq-abelian-surjection-count} \Surj_{[H']}(G^e,G^k) = (h^{e}-1) (h^e- h) \dots (h^{e}- h^{k-1} ) .\end{equation}
\end{lemma}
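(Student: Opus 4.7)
The plan is to recognize this count as linear algebra over a finite field. Because $G$ is abelian, every inner automorphism of $G$ is trivial, so $\Out(G)=\Aut(G)$ and the $[H']$-group structure on $G$ is just an action of $H'$ by automorphisms; $G$ thus becomes a $\mathbb Z[H']$-module, and since every subgroup of an abelian group is normal, simplicity of $G$ as an $[H']$-group is the same as simplicity as a $\mathbb Z[H']$-module.

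By Schur's lemma, $R:=\operatorname{End}_{[H']}(G)=\operatorname{End}_{\mathbb Z[H']}(G)$ is a division ring; being finite it is a field by Wedderburn's little theorem, of order $h$ by the definition of $h$. The natural action of $R$ makes $G$ into a nonzero $R$-vector space, and diagonally $G^e$ and $G^k$ are also $R$-vector spaces. The $[H']$-homomorphisms $G^e\to G^k$ are precisely the $R$-linear maps, giving a canonical bijection $\Hom_{[H']}(G^e,G^k)\cong M_{k\times e}(R)$, sending $\phi$ to the matrix $M=(M_{ij})$ with $\phi(g_1,\dots,g_e)=\bigl(\sum_j M_{1j}g_j,\dots,\sum_j M_{kj}g_j\bigr)$.

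The crux is to show that $\phi$ is surjective as a map of groups iff $M$ has rank $k$ over $R$, i.e.\ iff the rows $v_1,\dots,v_k$ of $M$ are $R$-linearly independent in $R^e$. Writing $\phi=\psi\otimes_R \mathrm{id}_G$, where $\psi:R^e\to R^k$ is the $R$-linear map with matrix $M$, and using that $G$ is a nonzero free $R$-module, surjectivity of $\phi$ is equivalent to surjectivity of $\psi$, which is exactly the rank-$k$ condition. One can also argue directly: if $\sum_i c_iv_i=0$ with $c_i\in R$ not all zero, then $\phi(G^e)$ lies in the proper subgroup $\{\vec x\in G^k:\sum_i c_ix_i=0\}$, properness following because any nonzero element of $R$ acts as an automorphism of $G$; conversely, if the $v_i$ are independent one extends them to a basis of $R^e$ and reads off preimages.

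Granted this criterion, the right-hand side of \eqref{eq-abelian-surjection-count} is the standard count of ordered $R$-linearly independent $k$-tuples in $R^e$: the $i$-th row may be any vector outside the $R$-span of its predecessors, contributing $h^e-h^{i-1}$ choices, and taking the product over $i=1,\dots,k$ gives the formula. The main obstacle is the surjectivity criterion; the rest is the routine passage from $H'$-equivariant maps to matrices together with the standard Gaussian counting.
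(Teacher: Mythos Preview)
Your proof is correct and follows essentially the same route as the paper's: pass to $\mathbb Z[H']$-modules, use Schur/Wedderburn to identify the endomorphism ring with a finite field $\mathbb F_h$, identify $\Hom_{[H']}(G^e,G^k)$ with $M_{k\times e}(\mathbb F_h)$, and count surjective matrices; you supply more detail than the paper on the surjectivity criterion.

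One wording slip worth fixing: the claim that ``the $[H']$-homomorphisms $G^e\to G^k$ are precisely the $R$-linear maps'' is false in general. If $\dim_R G=d>1$ (e.g.\ $H'=S_3$ acting on its $2$-dimensional irreducible over $\mathbb F_p$ for $p>3$, where $R=\mathbb F_p$), then $\Hom_R(G^e,G^k)$ has cardinality $h^{ked^2}$ while $\Hom_{[H']}(G^e,G^k)$ has cardinality $h^{ke}$; the $[H']$-maps form a proper subset of the $R$-linear ones. Fortunately your argument does not actually use this equality: the bijection $\Hom_{[H']}(G^e,G^k)\cong M_{k\times e}(R)$ you write down is correct (it comes from $\Hom_{\mathbb Z[H']}(G,G)=R$ and additivity in each variable), and your surjectivity argument via $\phi=\psi\otimes_R\mathrm{id}_G$ with $G$ a nonzero free $R$-module goes through unchanged. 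Simply delete or weaken that phrase and the proof is clean.
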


\begin{proof} Because $G$ is abelian, its outer automorphism group is equal to its automorphism group, so we may view $G$ as simply an abelian group with an action of $H'$, or as a $\mathbb Z[ H']$-module. In this view, $[H']$-homomorphisms between $G^e$ and $G^k$ are module homomorphisms.

Because $G$ is finite simple as a $[H']$-group, it is a simple module with finite cardinality, so its endomorphism algebra as a module is a finite field $\mathbb F_h$. Then maps from $G^e$ to $G^k$ are $k \times e$ matrices over $\mathbb F_h$, and are surjective if and only if this matrix is surjective. The result \eqref{eq-abelian-surjection-count} then follows because $(h^{e}-1) (h^e- h) \dots (h^{e}- h^{k-1} )$ is the number of $k \times e$ surjective matrices. \end{proof}

\begin{lemma}\label{non-abelian-surjection-count} Let $G$ be a non-abelian finite simple $[H']$-group, and let $| \Aut_{[H']}(G)|$ be the number of $[H']$-bijections from $G$ to $G$. Then
\begin{equation}\label{eq-non-abelian-surjection-count} \Surj_{[H']}{(G^e,G^k)} =  e (e-1) \dots (e+1-k)  | \Aut_{[H']}(G)|^k .\end{equation} 
\end{lemma}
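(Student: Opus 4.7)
The plan is to combine the classification of normal subgroups of a direct power of a nonabelian finite simple group with a careful unpacking of the $[H']$-equivariance condition. The target formula factors as
\[ e(e-1)\cdots(e-k+1)\, |\Aut_{[H']}(G)|^k = \binom{e}{k}\cdot k!\cdot |\Aut_{[H']}(G)|^k, \]
which suggests organizing the count as (i) choice of kernel, (ii) choice of which source coordinate projects onto which target coordinate, and (iii) choice of an $[H']$-automorphism of $G$ on each target coordinate.

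First I would recall that for $G$ a nonabelian finite simple group, the normal subgroups of $G^e$ are exactly the $2^e$ products $\prod_{i\in T}G_i$ indexed by subsets $T\subseteq\{1,\dots,e\}$, and that $\Aut(G^e)=\Aut(G)\wr S_e$ with $\Out(G^e)=\Out(G)\wr S_e$. For a surjection $f\colon G^e\to G^k$, comparing orders forces the kernel $N$ to be $\prod_{i\in T}G_i$ with $|T|=e-k$, giving $\binom{e}{k}$ possibilities. Each such $N$ is fixed setwise by the diagonal $[H']$-action on $G^e$, so the $[H']$-equivariance hypothesis imposes no extra restriction on which $N$ can occur. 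Having fixed $N$, the projection onto the coordinates in $T^c$ descends to an $[H']$-isomorphism $G^e/N\cong G^k$ (with the diagonal $[H']$-structure on the target), and every surjection with kernel $N$ is this projection postcomposed with some $\varphi\in\Aut(G^k)$. Thus the count reduces to $\binom{e}{k}\cdot|\Aut_{[H']}(G^k)|$.

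To compute $|\Aut_{[H']}(G^k)|$, write $\varphi=(\tau_1,\dots,\tau_k;\rho)\in\Aut(G)\wr S_k$. Using the decomposition $\Out(G^k)=\Out(G)\wr S_k$, the diagonal $[H']$-structure forces every lift $\sigma_1$ of $h\in H'$ from $\Out(G^k)$ to $\Aut(G^k)$ to have the form $(\tilde\alpha_1,\dots,\tilde\alpha_k;1)$, with each $\tilde\alpha_j$ an arbitrary lift of $\alpha(h)\in\Out(G)$. A direct computation of $\varphi\sigma_1=\sigma_2\varphi$ shows that the $i$-th coordinate of the required $\sigma_2$ must equal $\tau_i\tilde\alpha_{\rho^{-1}(i)}\tau_i^{-1}$, and this is a valid lift of $\alpha(h)$ if and only if the class $[\tau_i]\in\Out(G)$ centralizes $\alpha(h)$. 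Requiring this for every $h$ is exactly the condition that each $\tau_i$ be an $[H']$-automorphism of $G$, while $\rho\in S_k$ remains unconstrained; hence $|\Aut_{[H']}(G^k)|=k!\cdot|\Aut_{[H']}(G)|^k$. Multiplying by the $\binom{e}{k}$ kernel choices yields the claimed identity.

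The main subtlety is the last step: because the $[H']$-structure lives at the level of outer automorphisms, the quantifier ``for every lift $\sigma_1$, there exists a lift $\sigma_2$'' must be unpacked carefully rather than treated as a strict equivariance condition. The nonabelian-simple hypothesis on $G$ enters in two essential places, giving both the clean wreath-product description $\Out(G^k)=\Out(G)\wr S_k$ and the triviality of the $S_k$-component of every lift of a diagonal element, which together keep the bookkeeping of lifts manageable.
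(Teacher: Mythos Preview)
There is a genuine gap: you assume throughout that $G$ is a nonabelian finite simple \emph{group}, but the hypothesis of the lemma is only that $G$ is a nonabelian finite simple $[H']$-group, which by the paper's definition means $G$ has no nontrivial proper $H'$-invariant normal subgroups. As Lemma~\ref{finitely-many-isomorphism-classes} makes explicit, such a $G$ is in general a power $(G')^n$ of a nonabelian finite simple group $G'$, with the image of $H'$ in $\Out(G)=\Out(G')\wr S_n$ having transitive permutation part. When $n>1$ your two structural inputs both fail as stated: the normal subgroups of $G^e=(G')^{ne}$ are indexed by all $2^{ne}$ subsets of $\{1,\dots,ne\}$, not just the $2^e$ ``block'' products, and $\Aut(G^e)=\Aut(G')\wr S_{ne}$ strictly contains $\Aut(G)\wr S_e$. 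Consequently neither your kernel count nor your wreath-product computation of $\Aut_{[H']}(G^k)$ goes through directly.

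Your approach can be salvaged by passing to the finer decomposition into copies of $G'$ and using the transitivity of $H'$ on each size-$n$ block to cut the $H'$-invariant normal subgroups of $G^e$ back down to the $2^e$ block products; the automorphism count then requires analyzing the centralizer of the diagonal $H'$-image inside $\Aut(G')\wr S_{nk}$, which is doable but more delicate than what you wrote. The paper sidesteps all of this with a different, more elementary organization: it argues target-coordinate by target-coordinate, showing that any $[H']$-surjection $G^e\to G$ restricts to an $[H']$-isomorphism on exactly one source factor and is trivial on the rest (using only $[H']$-simplicity to control the image and nonabelianness of $G$ for the ``two full images would have to commute'' contradiction), and then assembles the $k$ coordinate projections. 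That argument never needs $G$ itself to be simple as an abstract group.
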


\begin{proof} We first prove that any $[H']$-surjection from $G^e$ to $G$ is the composition of projection onto one factor with an $[H']$-automorphism of $G$. To do this, note when $f: G^e \to G$ is restricted to each copy of $G$, it defines an $[H']$-homomorphism from $G$ to $G$, which because $G$ is $[H']$-simple must be either an $[H']$-automorphism or the trivial map.

These restrictions cannot all be trivial or else $f$ would fail to be surjective, and two cannot be automorphisms or else the images of two different copies of $G$ would each be all of $G$ and would fail to commute with each other, so one is an isomorphism and the rest are trivial. Thus $f$ is the composition of a projection with an $[H']$-automorphism.

Now consider an $[H']$-surjection from $G^e$ to $G^k$. Its composition with each of the $k$ projections remains an $[H']$-surjection, so each of these is a projection onto one factor composed with an automorphism. Conversely, given $k$ such projections onto one of $e$ factors and $k$ automorphisms, the induced map $G^e \to G^k$ is an $[H']$-homomorphism, because the $[H']$-automorphism condition for each of the $k$ factors gives a lift from $\Out(G) $ to $\Aut(G)$, hence a lift from $\Out(G)^k \subseteq \Out(G^k)$ to $\Aut(G)^k \subseteq \Aut(G^k)$.  Furthermore, the induced map $G^e \to G^k$ is surjective if and only if the same projection never appears twice. (We can check this on the level of groups, where it is obvious.)

Because the number of ordered $k$-tuples of choices of one out of $e$ projections, never repeating, is $ e (e-1) \dots (e+1-k) $, and the number of $k$ tuples of $[H']$-automorphisms is $| \Aut_{[H']}(G)|^k$, we obtain \eqref{eq-non-abelian-surjection-count}. 

\end{proof}

\begin{lemma}\label{surjection-count-splitting} Let $G_1,\dots, G_m$ be finite simple $[H']$-groups that are not pairwise $[H']$-isomorphic. Then
\begin{equation}\label{eq-surjection-count-splitting} \Surj_{[H']} \Bigl( \prod_{i=1}^m G_i^{e_i}, \prod_{i=1}^m G_i^{k_i}\Bigr) = \prod_{i=1}^m \Surj_{[H']} \left(G_i^{e_i}, G_i^{k_i} \right) . \end{equation} 
\end{lemma}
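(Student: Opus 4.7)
The plan is to show that every $[H']$-surjection $f : \prod_i G_i^{e_i} \to \prod_j G_j^{k_j}$ decomposes uniquely as a tuple of $[H']$-surjections $f_j : G_j^{e_j} \to G_j^{k_j}$, one for each $j$. Counting such tuples then gives \eqref{eq-surjection-count-splitting}. Given $f$, let $\pi_j$ denote the projection $\prod_l G_l^{k_l} \to G_j^{k_j}$; then $\pi_j \circ f$ is an $[H']$-surjection, and the main task is to show it factors through the projection $\prod_l G_l^{e_l} \to G_j^{e_j}$, i.e.\ vanishes on every $G_i^{e_i}$ with $i \ne j$.

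The subgroup $G_i^{e_i} \hookrightarrow \prod_l G_l^{e_l}$ is normal and $H'$-invariant: with the diagonal $[H']$-structure on the product, $H'$ acts factor-by-factor and hence fixes each coordinate subgroup up to inner automorphism. Consequently the restriction $(\pi_j \circ f)|_{G_i^{e_i}}$ is itself an $[H']$-homomorphism, and by further composition with a projection onto a single copy of $G_j$, it is enough to show that any $[H']$-homomorphism $\phi : G_i^{e_i} \to G_j$ with $i \ne j$ is trivial. Restricting $\phi$ to any one of the $e_i$ coordinate copies $G_i \subseteq G_i^{e_i}$, which is again $H'$-invariant, yields an $[H']$-homomorphism $G_i \to G_j$ between non-isomorphic finite simple $[H']$-groups. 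Its kernel is an $H'$-invariant normal subgroup of $G_i$, hence is either trivial or all of $G_i$; in the former case the image would be a nontrivial $H'$-invariant normal subgroup of $G_j$, forcing an $[H']$-isomorphism $G_i \cong G_j$ and contradicting our hypothesis. So $\phi$ vanishes on each coordinate copy, and since these copies generate $G_i^{e_i}$ as a group, $\phi$ is trivial.

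With these vanishings in hand, $\pi_j \circ f$ descends to an $[H']$-surjection $f_j : G_j^{e_j} \to G_j^{k_j}$, and $f$ is recovered as the product of the $f_j$. Conversely, given any tuple of $[H']$-surjections $(f_j : G_j^{e_j} \to G_j^{k_j})_j$, the product map is again an $[H']$-surjection, since the lifts of a given $h \in H'$ to $\Aut(G_j^{e_j})$ and $\Aut(G_j^{k_j})$ provided by each $f_j$ combine diagonally into intertwining lifts on the full products. This bijection establishes \eqref{eq-surjection-count-splitting}. The only place where care is really needed is the bookkeeping of $H'$-invariance when passing between the product, its coordinate subgroups, and the resulting quotients; once this is made precise, the whole argument reduces to the cleanly verified observation that non-isomorphic finite simple $[H']$-groups admit no nonzero $[H']$-homomorphisms between them.
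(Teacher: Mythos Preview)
Your overall strategy matches the paper's: decompose an $[H']$-surjection $f$ factor-wise by showing the ``cross-terms'' from $G_i$ to $G_j$ ($i\neq j$) vanish, and check that a product of factor-wise $[H']$-surjections is again an $[H']$-surjection.

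There is, however, a real gap in one step. You summarize the argument as reducing to ``the cleanly verified observation that non-isomorphic finite simple $[H']$-groups admit no nonzero $[H']$-homomorphisms between them.'' That general statement is false. Take $H'$ acting trivially, so that $[H']$-homomorphisms are just group homomorphisms, and consider the inclusion $A_5 \hookrightarrow A_6$: both are finite simple $[H']$-groups, they are not $[H']$-isomorphic, and yet this is a nonzero $[H']$-homomorphism between them. The underlying issue is that the image of an $[H']$-homomorphism need not be \emph{normal} in the target, so $[H']$-simplicity of $G_j$ by itself tells you nothing about that image. Correspondingly, when you assert that ``the image would be a nontrivial $H'$-invariant normal subgroup of $G_j$,'' the normality is exactly what requires justification.

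What rescues the argument, and what the paper spells out, is that you are not looking at an arbitrary $[H']$-homomorphism $G_i \to G_j$: the map in question is the restriction of the \emph{surjection} $f$ (followed by surjective projections) to a coordinate copy of $G_i$ that is \emph{normal} in the full product $\prod_l G_l^{e_l}$. Since the image of a normal subgroup under a surjection is normal, the image in $G_j$ is indeed normal, and then $H'$-invariance and your kernel/image dichotomy go through. So your approach is correct, but the normality of the image must be drawn from the surjectivity of $f$, not from any general fact about $[H']$-homomorphisms between simple $[H']$-groups.
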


\begin{proof} First observe that, given a tuple $f_1,\dots, f_m$ of $[H']$-surjections $f_i: G_i^{e_i}\to G_i^{k_i}$, the product \[ \prod_{i=1}^m f_i\hspace{5pt}
  \colon\hspace{5pt} 
   \prod_{i=1}^m G_i^{e_i}\to  \prod_{i=1}^m G_i^{k_i}\] is an $[H']$-surjection. Certainly $ \prod_{i=1}^m f_i$ is a surjective group homomorphism, so it suffices to check that for any $\sigma_1\in \Aut ( \prod_{i=1}^m G_i^{e_i}) $ lifting $h \in H'$, we can find $\sigma_2 \in \Aut( \prod_{i=1}^m G_i^{k_i} )$ lifting $h$ such that $\sigma_2 \circ \prod_{i=1}^m f_i = \prod_{i=1}^m f_i \circ \sigma_1$. Because $\sigma_1$ lifts an outer automorphism that stabilizes the individual factors $G_i^{e_i}$, $\sigma_1$ stabilizes the factors $G_i^{e_i}$, so in fact $\sigma_1 \in \prod_{i=1}^m \Aut( G_i^{e_i})$.  For each $i$ we can apply the $[H']$-homomorphism property of $f_i$ to the restriction of $\sigma_1$ to $\Aut( G_i^{e_i})$ to find a suitable element of $\Aut(G_i^{k_i})$, and then define $\sigma_2$ to be the product of these elements. This verifies the $[H']$-homomorphism condition.

Thus, it suffices to prove that every $[H']$-surjection $f:  \prod_{i=1}^m G_i^{e_i}\to  \prod_{i=1}^m G_i^{k_i}$ arises this way.

Let us first check that for any $i \neq j$ and any $1 \leq a \leq e_i$, $1 \leq b \leq k_j$, the restriction of $f$ to a map from the $a$th copy of $G_i$ to the $b$th copy of $G_j$ is zero.

Because the $a$th copy of $G_i$ is a normal subgroup of $\prod_{i=1}^m G_i^{e_i}$, and the image of a normal subgroup under any surjection is normal, the image of the restricted map $G_i \to G_j$ is a normal subgroup of $G_j$. 
By the homomorphism condition, this image is also $H'$-invariant, so because $G_j$ is $[H']$-simple the image must be all of $G_j$ or trivial. Similarly, the kernel of this map from $G_i $ to $ G_j$ is $H'$-invariant, so must be $G_i$ or trivial. Thus the map from $G_i$ to $G_j$  either a bijection or zero. It cannot be a bijection as, by assumption, $G_i$ and $G_j$ are not isomorphic, so it must be zero, as desired.

It follows that, as a group homomorphism, $f$ is the product of maps $f_i: G_i^{e_i}\to G_i^{k_i}$. Because $f$ is surjective, these maps $f_i$ are surjective. Because $f$ is an $[H']$-homomorphism, and $f_i$ is the composition of $f$ with the inclusion $G_i^{e_i } \to \prod_{i=1}^m G_i^{e_i}$ and projection $\prod_{i=1}^m G_i^{k_i} \to G_i^{k_i}$, which are both $[H']$-homomorphisms, $f_i$ must be an $[H']$-homomorphism, finishing the proof. 

\end{proof}

 The next two lemmas let us solve a basic version of the problem of reconstructing measures from moments, where we look at a measure on groups which consist of powers of a single group, and reconstruct from the moments only the measure of the trivial group. We define in Lemma \ref{c-k-exists} a sequence of coefficients, and describe its useful properties. In Lemma \ref{one-variable} we use these properties to prove the reconstruction statement, in a uniform way in both the abelian and non-abelian cases.

\begin{lemma}\label{c-k-exists} Let $H'$ be a group and let $G$ be a finite simple $[H']$-group. Then there exist constants $c_k \in \mathbb R$, depending on $G$, such that

\begin{enumerate}

\item $c_0=1$,
\item for any $e>0$ we have \[  \sum_{k=0}^r c_k  \Surj_{[H']}( G^{e}, G^k) \geq 0 \] if $r$ is even and \[  \sum_{k=0}^r c_k   \Surj_{[H']} ( G^{e}, G^k) \leq 0 \] if $r$ is odd, and
\item $|c_k|$ converges to $0$ superexponentially in $k$ (i.e. $|c_k| = O_N ( N^{-k})$ for every $N>0$.)\end{enumerate}  

\end{lemma}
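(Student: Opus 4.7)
My plan is to split on whether $G$ is abelian or non-abelian, use the explicit surjection counts from Lemmas \ref{abelian-surjection-count} and \ref{non-abelian-surjection-count} to write down $c_k$ in closed form, and verify the sign condition (2) via an exact formula for the partial sums. In both cases I will arrange that $\sum_{k=0}^{\infty} c_k \Surj_{[H']}(G^e, G^k) = 0$ for every $e > 0$ (the sum is actually finite, since $\Surj_{[H']}(G^e, G^k) = 0$ for $k > e$), so that condition (2) becomes a statement that the partial sums approach zero alternately from above and below.

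For the non-abelian case, Lemma \ref{non-abelian-surjection-count} gives $\Surj_{[H']}(G^e, G^k) = k!\binom{e}{k} A^k$ with $A = |\Aut_{[H']}(G)|$. Taking $c_k = (-1)^k / (k! A^k)$ gives $c_0 = 1$ and reduces the partial sum to $\sum_{k=0}^r (-1)^k \binom{e}{k} = (-1)^r \binom{e-1}{r}$, which has the required sign; superexponential decay of $|c_k|$ is immediate from the factorial. For the abelian case, Lemma \ref{abelian-surjection-count} can be rewritten as $\Surj_{[H']}(G^e, G^k) = h^{\binom{k}{2}} \binom{e}{k}_h \prod_{j=1}^k (h^j-1)$, where $h \geq 2$ is the order of the endomorphism field of $G$ and $\binom{e}{k}_h$ denotes the Gaussian $h$-binomial coefficient. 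I would take $c_k = (-1)^k / \prod_{j=1}^k (h^j-1)$, giving $c_k \Surj_{[H']}(G^e, G^k) = (-1)^k h^{\binom{k}{2}} \binom{e}{k}_h$, and then invoke the $q$-binomial partial-sum identity
\[ \sum_{k=0}^r (-1)^k q^{\binom{k}{2}} \binom{e}{k}_q = (-1)^r q^{\binom{r+1}{2}} \binom{e-1}{r}_q, \]
which I would prove by induction on $r$ using the $q$-Pascal rule $\binom{e}{k}_q = q^k \binom{e-1}{k}_q + \binom{e-1}{k-1}_q$. The right-hand side has the correct sign for every $r$, and the elementary bound $\prod_{j=1}^k (h^j-1) \geq h^{\binom{k}{2}}$ (valid since $h \geq 2$) gives the superexponential decay of $|c_k|$.

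The main obstacle is spotting the right candidate $c_k$ in the abelian case and the correct closed form for its partial sums; once the $q$-binomial identity above is in hand, the rest of the argument is routine bookkeeping, and the non-abelian case is simply its degenerate ``$q=1$'' version, with the Gaussian binomial collapsing to the ordinary binomial.
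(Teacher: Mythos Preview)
Your proposal is correct and matches the paper's proof essentially line for line: the same choices of $c_k$ in both the abelian and non-abelian cases, the same partial-sum identities (the paper phrases the $q$-binomial one as a ``telescoping sum'' rather than an induction, but this is the same computation), and the same appeal to the $q$-Pascal rule. Your explicit bound $\prod_{j=1}^k(h^j-1)\ge h^{\binom{k}{2}}$ for the superexponential decay is a small elaboration beyond the paper's ``(3) is clear,'' but otherwise the arguments coincide.
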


To interpret these conditions, note that (1) and (2) together imply the identity
\[ \sum_{k=0}^{\infty} c_k \Surj_{[H']}( G^{e}, G^k)  = \begin{cases} 1 &\textrm{if }e=0\\ 0&\textrm{otherwise}\end{cases}.\]  Thus, for a measure $\mu$ on the nonnegative integers, we can attempt to reconstruct $\mu(0)$ from the moments $\sum_{e=0}^{\infty}\Surj_{[H']}( G^{e}, G^k) $ by summing the $k$th moment against $c_k$. Using all three conditions, we can prove that this attempt works.

The formula for $c_k$ in the case $G$ abelian is essentially due to Heath-Brown \cite[Equation (22)]{HB-o}, who used this identity and part (3), but not part (2), to prove a weaker uniqueness statement.

\begin{proof} If $G$ is not abelian we have $\Surj_{[H']}( G^{e}, G^k) = e (e-1) \dots (e+1-k)  | \Aut_{[H']}(G)|^k$ by Lemma \ref{non-abelian-surjection-count} and we take \[ c_k =  \frac{ (-1)^k }{ k! | \Aut_{[H']}(G)|^k }.\]  Here (1) is clear, (2) follows from the identity
\[ \sum_{k=0}^{r} (-1)^k  {e\choose k}  = (-1)^r { e-1 \choose r} \] which is $\geq 0$ if $r$ is even and $\leq 0 $ if $r$ is odd, and (3) is clear.

If $G$ is abelian, let $h= \Hom_{[H']}(G, G) $. The number of surjections is given by \eqref{eq-abelian-surjection-count} from Lemma \ref{abelian-surjection-count} and we take $c_k = \frac{(-1)^ k  } { (h-1) \dots (h^k -1 ) }$. Then (1) is clear, (2) follows from the identity
\begin{equation}\label{q-binomial-identity} \sum_{k=0}^r (-1)^k  { e \choose k}_h  h^{ k \choose 2} = (-1)^r { e-1 \choose r }_h h^{ r+1 \choose 2} \end{equation}  which is $\geq 0$ if $r$ is even and $\leq 0$ if $r$ is odd, and (3) is clear. In this identity, the binomial coefficients are interpreted as $q$-binomial coefficients, except with $q=h$. The identity \eqref{q-binomial-identity} follows from the more standard identity 
\[ \binom{e}{k}_h = h^k \binom{e-1}{k}_h + \binom{e-1}{k-1}_h\] by a telescoping sum. \end{proof}

\begin{lemma}\label{one-variable} Let $H$ be a $\Gamma$-group and let $G$ be a finite simple $[H']$-group.  Let $m$ and $m_t$ for $t \in \mathbb N$ be functions from $\mathbb N$ to the nonnegative real numbers. Assume that, for all $k$,
\begin{equation}\label{one-var-bound} \sum_{e=0}^{\infty}  \Surj_{[H']} ( G^e, G^k) m(e) = O \left(  O(1)^k \right) \end{equation}
and
\begin{equation}\label{one-var-limit} \lim_{t\to\infty} \sum_{e=0}^{\infty}  \Surj_{[H']} ( G^e, G^k) m_t (e)= \sum_{e=0}^{\infty}  \Surj_{[H']} ( G^e, G^k) m(e) \end{equation}
Then
\begin{equation}\label{one-var-conc} \lim_{t\to \infty} m_t(0) = m(0) .\end{equation}\end{lemma}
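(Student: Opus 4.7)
The plan is to invoke the coefficients $c_k$ constructed in Lemma \ref{c-k-exists} to express $m(0)$ and $m_t(0)$ in terms of the moments $S_k := \sum_e \Surj_{[H']}(G^e,G^k)\, m(e)$ and $S_k^t := \sum_e \Surj_{[H']}(G^e,G^k)\, m_t(e)$. Heuristically, the identity $\sum_{k=0}^\infty c_k \Surj_{[H']}(G^e,G^k)=\mathbf{1}_{e=0}$ noted after Lemma \ref{c-k-exists} gives $\sum_{k=0}^\infty c_k S_k = m(0)$, and likewise for $m_t$, so it remains to interchange sums and pass to the limit. The hypothesis \eqref{one-var-bound} gives $S_k = O(O(1)^k)$, while part (3) of Lemma \ref{c-k-exists} gives superexponential decay of $|c_k|$, so the series $\sum_k c_k S_k$ converges absolutely; the analogous series for $S_k^t$ is only eventually controlled, so I will argue through finite truncations.

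The key step is to use parity information from part (2) of Lemma \ref{c-k-exists} to sandwich $m_t(0)$ between truncated moment sums. For any finite $r$, splitting $c_k$ into positive and negative parts and applying Tonelli in each, I can exchange summation to write
\[ \sum_{k=0}^r c_k S_k^t \;=\; \sum_{e=0}^\infty m_t(e) \sum_{k=0}^r c_k \Surj_{[H']}(G^e,G^k). \]
The $e=0$ contribution equals $m_t(0)$ by property (1), while for $e>0$ the inner sum is $\geq 0$ if $r$ is even and $\leq 0$ if $r$ is odd by property (2). Hence
\[ \sum_{k=0}^r c_k S_k^t \;\geq\; m_t(0) \quad (r \text{ even}), \qquad \sum_{k=0}^r c_k S_k^t \;\leq\; m_t(0) \quad (r \text{ odd}), \]
and the identical sandwich holds with $(m,S_k)$ in place of $(m_t,S_k^t)$.

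To conclude, fix an even $r$ and take $t\to\infty$ using \eqref{one-var-limit} for each $k\leq r$, obtaining $\limsup_{t}m_t(0)\leq \sum_{k=0}^r c_k S_k$; letting $r\to\infty$ along even integers and invoking the absolute convergence of $\sum_k c_k S_k$ yields $\limsup_t m_t(0) \leq \sum_{k=0}^\infty c_k S_k$. The analogous argument through odd $r$ gives $\liminf_t m_t(0) \geq \sum_{k=0}^\infty c_k S_k$. Applying the sandwich to $m$ and passing $r\to\infty$ along both parities shows that $\sum_{k=0}^\infty c_k S_k = m(0)$, which combines with the two inequalities above to give \eqref{one-var-conc}. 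There is no real obstacle: Lemma \ref{c-k-exists} has packaged the combinatorial content, and the only care required is bookkeeping for absolute convergence so that the interchange of sums and the double limit $r,t\to\infty$ are rigorously justified.
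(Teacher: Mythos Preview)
Your proposal is correct and follows essentially the same argument as the paper: use the coefficients $c_k$ from Lemma~\ref{c-k-exists}, swap the finite $k$-sum with the $e$-sum to obtain the parity sandwich $\sum_{k\le r} c_k S_k^t \gtrless m_t(0)$, pass to the limit in $t$ for each fixed $r$ via \eqref{one-var-limit}, and then let $r\to\infty$ using \eqref{one-var-bound} and the superexponential decay of $|c_k|$ to identify the limit as $m(0)$. The only cosmetic difference is that you invoke Tonelli for the interchange, whereas the paper simply uses that the $k$-sum is finite; either way the argument is the same.
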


\begin{proof} 
By Lemma \ref{c-k-exists}, part (1) and Lemma \ref{c-k-exists}, part (2), we have
\[ \sum_{k=0}^r c_k \sum_{e=0}^{\infty}  \Surj_{[H']} ( G^e, G^k) m(e) = \sum_{e=0}^{\infty} \left(\sum_{k=0}^rc_k \Surj_{[H']} ( G^e, G^k) \right) m(e) \]\[ = m(0) + \sum_{e=1}^{\infty} \left(\sum_{k=0}^rc_k \Surj_{[H']} ( G^e, G^k) \right) m(e) \]\[  \geq m(0) \] if $r$ is even and \[ \leq m(0)\] if $r$ is odd. Moreover, the analogous inequalities hold for $m_t$ for all $t$.

By Lemma \ref{c-k-exists}(3) and our assumption \eqref{one-var-bound}, $\sum_{k=0}^\infty c_k \sum_{e=0}^{\infty}  \Surj_{[H']} ( G^e, G^k) m(e) $ is a convergent series, so \[ \lim_{r\to \infty} \sum_{k=0}^r c_k \sum_{e=0}^{\infty}  \Surj_{[H']} ( G^e, G^k) m(e) = m(0).\] 

Therefore for all $r$ even we have
\[ \lim\sup_{t\to\infty} m_t(0) \leq \lim\sup_{t\to \infty} \sum_{k=0}^r c_k \sum_{e=0}^{\infty}  \Surj_{[H']} ( G^e, G^k) m_t(e)= \sum_{k=0}^r c_k  \lim_{t\to\infty} \sum_{e=0}^{\infty}  \Surj_{[H']} ( G^e, G^k) m_t(e)  \] \[=  \sum_{k=0}^r c_k \sum_{e=0}^{\infty}  \Surj_{[H']} ( G^e, G^k) m(e)\]
and thus
\begin{equation}\label{one-var-upper} \lim\sup_{t\to\infty} m_t(0) \leq \lim_{ r\to\infty}  \sum_{k=0}^r c_k \sum_{e=0}^{\infty}  \Surj_{[H']} ( G^e, G^k) m(e) = m(0)\end{equation}
and for $r$ odd we have
\[ \lim\inf_{t\to\infty} m_t(0) \geq \lim\inf_{t\to \infty} \sum_{k=0}^r c_k \sum_{e=0}^{\infty}  \Surj_{[H']} ( G^e, G^k) m_t(e)= \sum_{k=0}^r c_k \sum_{e=0}^{\infty} \lim_{t\to\infty}  \Surj_{[H']} ( G^e, G^k) m_t(e)  \] \[=  \sum_{k=0}^r c_k \sum_{e=0}^{\infty}  \Surj_{[H']} ( G^e, G^k) m(e)\]
and thus
\begin{equation}\label{one-var-lower} \lim\inf_{t\to\infty} m_t(0) \geq \lim_{ r\to\infty}  \sum_{k=0}^r c_k \sum_{e=0}^{\infty}  \Surj_{[H']} ( G^e, G^k) m(e) = m(0).\end{equation}
Combining \eqref{one-var-upper} and \eqref{one-var-lower}, we get our desired conclusion \eqref{one-var-conc}.
\end{proof}

The next two lemmas improve the statement of Lemma \ref{one-variable} from powers of a single group to products of powers of a finite list of groups. This is based on an inductive strategy where we handle one group at a time. Lemma \ref{inductive-step} will give the inductive step and Lemma \ref{flat-case} will complete the argument.

\begin{lemma} \label{inductive-step} Let $H$ be a $\Gamma$-group and let $G_1,\dots, G_m$ be finite simple $[H']$-groups. Let $\tilde{\mu}$ be a measure on the set of isomorphism classes of groups $\prod_{i=1}^m G_i ^{e_i}$ and let $\tilde{\mu}_t$ be a sequence of such measures. Let $j$ be a natural number from $1$ to $m$.

Assume that for all $k_j,\dots, k_m \in \mathbb N$ we have
\begin{equation}\label{ind-bound} \sum_{e_{j}, \dots ,e_m=0}^{\infty}  \Surj_{[H']} \left( \prod_{i=j}^m G_i^{e_i} , \prod_{i=j}^m G_i^{k_i}\right)  \tilde{\mu} \left(  \prod_{i=j}^m G_i^{e_i} \right)  = O \left( O(1)^{ \sum_{i=j}^m k_i } \right) \end{equation}
and
\begin{multline}\label{ind-limit}   \lim_{t\to\infty} \sum_{e_{j}, \dots ,e_m=0}^{\infty}  \Surj_{[H']} \left( \prod_{i=j}^m G_i^{e_i} , \prod_{i=j}^m G_i^{k_i}\right)  \tilde{\mu}_t \left(  \prod_{i=j}^m G_i^{e_i} \right)  \\ =   \sum_{e_{j}, \dots ,e_m=0}^{\infty}  \Surj_{[H']} \left( \prod_{i=j}^m G_i^{e_i} , \prod_{i=j}^m G_i^{k_i}\right)  \tilde{\mu} \left(  \prod_{i=j}^m G_i^{e_i} \right). \end{multline}
Then for all $k_{j+1},\dots, k_m \in \mathbb N$ we have
\begin{equation}\label{ind-conc} \begin{split}    \lim_{t \to \infty}  \sum_{e_{j+1}, \dots ,e_m=0}^{\infty}  \Surj_{[H']} \Bigl( \prod_{i=j+1}^m G_i^{e_i} , \prod_{i=j+1}^m G_i^{k_i}\Bigr)  \tilde{\mu}_t \Bigl(  \prod_{i=j+1}^m G_i^{e_i} \Bigr)\\ = \sum_{e_{j+1}, \dots ,e_m=0}^{\infty}  \Surj_{[H']} \Bigl( \prod_{i=j+1}^m G_i^{e_i} , \prod_{i=j+1}^m G_i^{k_i}\Bigr)  \tilde{\mu} \Bigl(  \prod_{i=j+1}^m G_i^{e_i} \Bigr).   \end{split} \end{equation} \end{lemma}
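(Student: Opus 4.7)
The plan is to reduce to the one-variable Lemma~\ref{one-variable} applied to the single simple group $G_j$, by partially summing out $e_{j+1}, \dots, e_m$. I would fix $k_{j+1}, \dots, k_m \in \mathbb N$ and define
\[ m(e) = \sum_{e_{j+1}, \dots, e_m=0}^{\infty} \Surj_{[H']}\Bigl(\prod_{i=j+1}^m G_i^{e_i},\, \prod_{i=j+1}^m G_i^{k_i}\Bigr)\, \tilde{\mu}\Bigl(G_j^{e} \times \prod_{i=j+1}^m G_i^{e_i}\Bigr), \]
and analogously $m_t(e)$. Because $G_j^0$ is trivial, $m(0)$ equals the right-hand side of \eqref{ind-conc} while $\lim_{t \to \infty} m_t(0)$ equals the left-hand side, so it would suffice to apply Lemma~\ref{one-variable} to $m$ and $m_t$ with the simple $[H']$-group $G_j$.

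To verify the hypotheses of Lemma~\ref{one-variable}, I would invoke Lemma~\ref{surjection-count-splitting} to factor
\[ \Surj_{[H']}\Bigl(\prod_{i=j}^m G_i^{e_i},\, \prod_{i=j}^m G_i^{k_i}\Bigr) = \Surj_{[H']}(G_j^{e_j}, G_j^{k_j}) \cdot \Surj_{[H']}\Bigl(\prod_{i=j+1}^m G_i^{e_i},\, \prod_{i=j+1}^m G_i^{k_i}\Bigr), \]
valid because the $G_i$ are (implicitly) pairwise non-isomorphic as $[H']$-groups. Summing first over $(e_{j+1}, \dots, e_m)$ and then over $e_j$ rewrites the left-hand side of \eqref{ind-bound} as $\sum_{e=0}^\infty \Surj_{[H']}(G_j^e, G_j^{k_j})\, m(e)$, and the bound $O(O(1)^{\sum_{i=j}^m k_i})$ becomes $O(O(1)^{k_j})$ once the fixed factor $O(1)^{\sum_{i > j} k_i}$ is absorbed into the implicit constant; this is exactly hypothesis \eqref{one-var-bound} of Lemma~\ref{one-variable}. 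The same rearrangement turns \eqref{ind-limit} into \eqref{one-var-limit}. Applying Lemma~\ref{one-variable} then gives $\lim_{t \to \infty} m_t(0) = m(0)$, which is precisely \eqref{ind-conc}.

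The hardest part here is really just bookkeeping rather than a substantive obstacle: the key input, Lemma~\ref{surjection-count-splitting}, requires the $G_i$ to be pairwise non-isomorphic, which is implicit in the statement (since $\tilde{\mu}$ is assigned to isomorphism classes of $\prod_{i=1}^m G_i^{e_i}$ and the parameterization of such groups by tuples $(e_i)$ is only bijective under that assumption). Once the factorization is in hand, the rest is a direct translation of the hypotheses into the form needed by the one-variable lemma, and Lemma~\ref{inductive-step} follows.
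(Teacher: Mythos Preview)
Your proposal is correct and matches the paper's proof essentially step for step: define $m(e)$ and $m_t(e)$ by summing out $e_{j+1},\dots,e_m$, use Lemma~\ref{surjection-count-splitting} to rewrite the hypotheses \eqref{ind-bound} and \eqref{ind-limit} as \eqref{one-var-bound} and \eqref{one-var-limit}, and then apply Lemma~\ref{one-variable}. Your remark that Lemma~\ref{surjection-count-splitting} requires the $G_i$ to be pairwise non-isomorphic is a fair observation about an implicit hypothesis.
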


\begin{proof} Fix $k_{j+1},\dots, k_m$.  
Define \[ m(e) =  \sum_{e_{j+1}, \dots ,e_m=0}^{\infty}  \Surj_{[H']} \left( \prod_{i=j+1}^m G_i^{e_i} , \prod_{i=j+1}^m G_i^{k_i}\right)  \tilde{\mu} \left(  G_j^ e \times \prod_{i=j+1}^m G_i^{e_i} \right)\] and \[ m_t(e) =  \sum_{e_{j+1}, \dots ,e_m=0}^{\infty}  \Surj_{[H']} \left( \prod_{i=j+1}^m G_i^{e_i} , \prod_{i=j+1}^m G_i^{k_i}\right)  \tilde{\mu}_t \left(  G_j^ e \times \prod_{i=j+1}^m G_i^{e_i} \right).\]  Thus, for any natural number $k$, defining $k_j=k$, we have by Lemma \ref{surjection-count-splitting},
\begin{equation}\label{mu-m-conversion} \begin{split} & \sum_{e=0}^{\infty} m(e) \Surj_{[H'] } (G_j^e, G_j^k) \\ =& \sum_{e=0}^{\infty}  \sum_{e_{j+1}, \dots ,e_m=0}^{\infty}  \Surj_{[H']} \left( \prod_{i=j+1}^m G_i^{e_i} , \prod_{i=j+1}^m G_i^{k_i}\right)   \Surj_{[H'] } (G_j^e, G_j^k)\tilde{\mu} \left(  G_j^ e \times \prod_{i=j+1}^m G_i^{e_i} \right)\\  = &\sum_{e=0}^{\infty}  \sum_{e_{j+1}, \dots ,e_m=0}^{\infty}  \Surj_{[H']} \left(G_j^e \times  \prod_{i=j+1}^m G_i^{e_i} , G_j^k \times  \prod_{i=j+1}^m G_i^{k_i}\right)  \tilde{\mu} \left(  G_j^ e \times \prod_{i=j+1}^m G_i^{e_i} \right)\\=& \sum_{e_{j}, \dots ,e_m=0}^{\infty}  \Surj_{[H']} \left( \prod_{i=j}^m G_i^{e_i} , \prod_{i=j}^m G_i^{k_i}\right)  \tilde{\mu} \left(  \prod_{i=j}^m G_i^{e_i} \right), \end{split}\end{equation} and a similar identity holds with $m_t$ and $\tilde{\mu}_t$.

We now apply Lemma \ref{one-variable} to $m$ and $m_t$.

Because the number of surjections from any group to another is nonnegative, $m$ and $m_t$ are nonnegative.

Applying \eqref{mu-m-conversion} for $m$ and $m_t$, we see that the hypothesis \eqref{one-var-limit} of Lemma \ref{one-variable} is exactly our assumption \eqref{ind-limit}.

Applying \eqref{mu-m-conversion}, the hypothesis \eqref{one-var-bound} of Lemma \ref{one-variable} follows from our assumption \eqref{ind-bound} since with $k_{j+1},\dots, k_m$ fixed, $O \left( O(1)^{  \sum_{i=j}^m k_i } \right)  = O \left( O(1)^{k_j} \right) =O \left( O(1)^{k} \right)  $.

Because we have verified both hypotheses, we can apply Lemma \ref{one-variable}. We use the definition of $m$ and $m_t$ to see that the conclusion \eqref{one-var-conc} is exactly our desired \eqref{ind-conc}. 

\end{proof}

\begin{lemma}\label{flat-case}
Let $H'$ be a finite group and let $G_1,\dots, G_m$ be finite simple $[H']$-groups. Let $\tilde{\mu}$ be a measure on the set of isomorphism classes of groups $\prod_{i=1}^m G_i ^{e_i}$ and let $\tilde{\mu}_t$ be a sequence of such measures. 

Assume that for all $k_1,\dots, k_m \in \mathbb N$ we have
\begin{equation}\label{flat-bound} \sum_{e_{1}, \dots ,e_m=0}^{\infty}  \Surj_{[H']} \left( \prod_{i=1}^m G_i^{e_i} , \prod_{i=1}^m G_i^{k_i}\right)  \tilde{\mu} \left(  \prod_{i=1}^m G_i^{e_i} \right)  = O \left( O(1)^{ \sum_{i=1}^m k_i } \right) \end{equation}
and
\begin{equation}\label{flat-limit}   \lim_{t \to \infty}  \sum_{e_{1}, \dots ,e_m=0}^{\infty}  \Surj_{[H']} \left( \prod_{i=1}^m G_i^{e_i} , \prod_{i=1}^m G_i^{k_i}\right)  \tilde{\mu}_t \left(  \prod_{i=1}^m G_i^{e_i} \right) =   \sum_{e_{1}, \dots ,e_m=0}^{\infty}  \Surj_{[H']} \left( \prod_{i=1}^m G_i^{e_i} , \prod_{i=1}^m G_i^{k_i}\right)  \tilde{\mu} \left(  \prod_{i=1}^m G_i^{e_i} \right). \end{equation}
Then for all we have
\begin{equation}\label{flat-conc}  \lim_{t \to \infty} \tilde{\mu}_t ( 1 ) = \tilde{\mu} ( 1) . \end{equation}\end{lemma}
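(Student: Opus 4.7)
The plan is to induct on $j$ from $1$ to $m$, applying Lemma \ref{inductive-step} at each step to strip off one simple factor from the moment sums, until nothing remains and the limit assertion collapses to $\lim_{t\to\infty} \tilde\mu_t(1) = \tilde\mu(1)$.

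At the base $j=1$, hypotheses \eqref{ind-bound} and \eqref{ind-limit} of Lemma \ref{inductive-step} coincide with the given \eqref{flat-bound} and \eqref{flat-limit}. For the inductive step, I need to show that both hypotheses of Lemma \ref{inductive-step} at level $j+1$ follow from them at level $j$. The limit hypothesis at $j+1$ is immediate, since it is literally the conclusion \eqref{ind-conc} of Lemma \ref{inductive-step} applied at level $j$. For the bound hypothesis at $j+1$, I specialize $k_j = 0$ in the bound at level $j$: by Lemma \ref{surjection-count-splitting} together with $\Surj_{[H']}(G_j^{e_j}, 1) = 1$, the $G_j$ factor drops out of the surjection count, giving
\[ \sum_{e_{j}, \dots, e_m = 0}^{\infty} \Surj_{[H']}\Bigl(\prod_{i=j+1}^m G_i^{e_i}, \prod_{i=j+1}^m G_i^{k_i}\Bigr) \tilde\mu\Bigl(\prod_{i=j}^m G_i^{e_i}\Bigr) = O\Bigl(O(1)^{\sum_{i=j+1}^m k_i}\Bigr). \]
Every summand is nonnegative, so restricting the range to $e_j = 0$ can only decrease the left-hand side; this restriction yields exactly the left-hand side of \eqref{ind-bound} at level $j+1$, and the right-hand side bound is inherited.

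After $m$ iterations I arrive at the limit assertion at level $m+1$: the sum is over the empty set of indices, the surjection count is $\Surj_{[H']}(1,1) = 1$, and the statement reduces to $\lim_{t\to\infty}\tilde\mu_t(1) = \tilde\mu(1)$, which is \eqref{flat-conc}. There is no real obstacle, since all the analytic content has been packaged inside Lemma \ref{inductive-step} (and ultimately Lemma \ref{one-variable}); what remains is only the combinatorial bookkeeping of verifying that the hypotheses propagate through the induction, which is handled by the nonnegativity argument above.
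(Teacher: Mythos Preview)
Your proof is correct and follows essentially the same inductive strategy as the paper: apply Lemma \ref{inductive-step} repeatedly to strip off one simple factor at a time, verifying the bound hypothesis at each stage by specializing earlier $k_i$ to zero and using nonnegativity to restrict the corresponding $e_i$. The only cosmetic difference is that the paper derives \eqref{ind-bound} at level $j$ directly from \eqref{flat-bound} (setting $k_1,\dots,k_{j-1}=0$ all at once) rather than inductively from level $j-1$, but this is the same argument.
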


\begin{proof} The hypothesis \eqref{ind-limit} and conclusion \eqref{ind-conc} of Lemma \ref{inductive-step} are identical, except that the conclusion has $j+1$ where the hypothesis has $j$. This is exactly what we need for an inductive argument. Based on this idea, we will prove that \eqref{ind-limit} holds for all $j$ from $1$ to $m+1$ by induction on $j$.

The base case of this induction is the $j=1$ case of \eqref{ind-limit}, which is exactly our assumption \eqref{flat-limit}.

The induction step follows from Lemma \ref{inductive-step} once we check the other hypothesis \eqref{ind-bound} of Lemma \ref{inductive-step}. This hypothesis follows from our assumption \eqref{flat-bound} because \eqref{ind-bound} requires us to bound the sum in \eqref{flat-bound}, restricted to the case when $k_1,\dots, k_{j-1}=0$. Because all terms in \eqref{flat-bound} are nonnegative, the restricted sum is bounded by the original sum.

This verifies the induction, and finally we observe that the $j=m+1$ case of \eqref{ind-limit} is our desired conclusion \eqref{flat-conc}.

\end{proof} 

We have now obtained a special case of Theorem \ref{main-bound-group-intro}, where the measure is supported on the products of elements from a specific finite list of groups and we only reconstruct the measure of the identity from the moments, rather than the measure of an arbitrary element. We now reduce the probability that a random group $X$ is $H$ to the probability that a random surjection $X \to H$ is an isomorphism, which we reduce to the probability that the kernel of the random surjection is trivial, which we reduce to the probability that the quotient of the kernel by the intersection of all its maximal proper subgroups is trivial. This strategy was already used in \cite{LWZ}, and less explicitly in other works, to compute a measure on random groups. The key additional observation is that we can calculate the moments of this transformed random model straightforwardly from our original moments, which then allows us to apply Lemma \ref{flat-case}.

Our first lemmas study the quotient of an $[H']$-group by the intersection of its maximal proper $H'$-invariant normal subgroups:

\begin{defi} For $T$ an $[H']$-group, we define $Q(T)$ to be the quotient of $T$ by the intersection of all its maximal proper $H'$-invariant normal subgroups. Because this intersection is $H'$-invariant, $Q(T)$ carries a natural $[H']$-structure. \end{defi}

\begin{lemma}\label{Q-finite-simple} For $T$ a finite $[H']$-group, $Q(T)$ is a product of finite simple $[H']$-groups. \end{lemma}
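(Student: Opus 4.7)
The plan is to first reduce the problem by passing from $T$ to the quotient $Q(T) = T/N$, where $N$ is the intersection of all maximal proper $H'$-invariant normal subgroups of $T$. The maximal proper $H'$-invariant normal subgroups of $Q(T)$ are exactly the images $M_\alpha/N$ of those of $T$, and their intersection in $Q(T)$ is trivial. So it suffices to prove: if $T$ is a finite $[H']$-group and the intersection of all its maximal proper $H'$-invariant normal subgroups is already trivial, then $T$ itself is a product of finite simple $[H']$-groups.

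Because $T$ is finite, there are only finitely many maximal proper $H'$-invariant normal subgroups, so we may choose a subcollection $M_1, \dots, M_s$ that is minimal with respect to the property $\bigcap_{j=1}^s M_j = 1$. Each quotient $T/M_j$ is then a finite simple $[H']$-group by the maximality of $M_j$, and the product of the quotient maps defines an $[H']$-homomorphism
\[ \phi : T \to \prod_{j=1}^s T/M_j \]
whose kernel is $\bigcap_j M_j = 1$, so $\phi$ is injective.

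The main step is the surjectivity of $\phi$. Set $N_j = \bigcap_{i \neq j} M_i$. Minimality of the subcollection forces $N_j \not\subseteq M_j$, and maximality of $M_j$ then gives $N_j M_j = T$; together with $N_j \cap M_j = \bigcap_i M_i = 1$ this yields $T = N_j \times M_j$ as groups. Since $N_j \subseteq M_i$ for every $i \neq j$, the image $\phi(N_j)$ is trivial in every factor $T/M_i$ with $i \neq j$ and maps isomorphically onto $T/M_j$. Hence $\phi(T)$ contains every one of the $s$ direct factors, so $\phi$ is surjective.

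It remains to check that $\phi$ is compatible with the $[H']$-structures, not just a group isomorphism. Each quotient map $T \to T/M_j$ is an $[H']$-homomorphism (this is precisely what the induced $[H']$-structure on a quotient by an $H'$-invariant normal subgroup is set up to guarantee), the $[H']$-structure on $\prod_j T/M_j$ is the one induced componentwise via $\prod_j \Out(T/M_j) \to \Out(\prod_j T/M_j)$, and given any lift $\sigma_2$ on the product one produces a lift $\phi^{-1} \sigma_2 \phi$ on $T$ of the same element of $H'$. The main obstacle is the surjectivity step; the $[H']$-compatibility is essentially formal once the group-level decomposition is in place.
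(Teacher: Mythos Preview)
Your proof is correct and follows essentially the same approach as the paper: both show that the map from $T$ (respectively $Q(T)$) to the product of its simple $[H']$-quotients is an isomorphism, the key step being that maximality of $M_j$ together with $N_j \not\subseteq M_j$ forces $N_j M_j = T$. The only cosmetic difference is that the paper packages this as an induction on the number of maximal subgroups in the intersection, adding one subgroup at a time, whereas you select a minimal subcollection with trivial intersection and argue in one shot.
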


\begin{proof}Let us check that the quotient by any intersection of $n$ maximal proper $H'$-invariant normal subgroups is a product of finite simple $[H']$-groups, by induction on $n$.  For the induction step, let $Z$ be the intersection of $n$ such subgroups, and let $W$ be another such subgroup. If $W$ contains $Z$,  $W \cap Z=Z$ and we are done. Otherwise, $WZ$ is an $H'$-invariant normal subgroup containing $W$, but not equal to $W$, and so $WZ=T$

Because $WZ=T$, the natural map $T/ (W \cap Z) \to (T/W) \times (T/Z)$ is an isomorphism. Because $W$ and $Z$ are $ H'$-invariant, this map is an $[H']$-homomorphism. By the induction hypothesis, $T/Z$ is a product of finite simple $[H']$-groups. Finally, $(T/W)$ is a finite simple $[H']$-group because the inverse image in $T$ of any nontrivial proper $[H']$-invariant normal subgroup would properly contain $W$, contradicting the maximality of $W$.

It follows that $T/ (W \cap Z)$ is a product of finite simple $[H']$-groups, so this verifies the induction step. Because the base case $n=0$ is trivial, the induction is complete.

\end{proof}

\begin{lemma}\label{finitely-many-isomorphism-classes} There exist finitely many finite simple $[H']$-groups $G_i$, up to isomorphism, such that there exists an extension $1 \to G_i \to G \to H \to 1$ of $\Gamma$-groups compatible with the actions of $H $ and $ \Gamma$ on $G_i$ by outer automorphisms, where $G$ is a level-$\mathcal C$ $\Gamma$-group. \end{lemma}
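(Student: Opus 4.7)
My plan is to classify the possibilities for $G_i$ using the structure theory of finite simple $[H']$-groups together with the level-$\mathcal C$ constraint. First I would observe that $G_i$ is itself level-$\mathcal C$: as a $\Gamma$-invariant normal subgroup of the level-$\mathcal C$ $\Gamma$-group $G$, it is a $\Gamma$-invariant subquotient of some product $\prod_j C_j^{n_j}$ with $C_j\in\mathcal C$.

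Next, a standard minimal normal subgroup argument, applied to the $H'$-action on the set of normal subgroups of $G_i$, shows that $G_i$ falls into one of two cases. Either (i) $G_i$ is abelian, hence a simple $\mathbb F_p[H']$-module for some prime $p$; or (ii) $G_i\cong S^n$ for a non-abelian finite simple group $S$ and some $n\geq 1$, with $H'$ acting transitively on the $n$ factors through $H'\to \Out(S^n)\cong \Out(S)\wr S_n$. In case (i), $p$ divides the exponent of $G_i$, which divides $|\mathcal C|$, leaving finitely many $p$; for each, the finite group algebra $\mathbb F_p[H']$ has only finitely many simple modules up to isomorphism. In case (ii), transitivity forces $n\leq |H'|$, and for each fixed $S$ and $n$ the set of $[H']$-structures on $S^n$ (homomorphisms $H'\to \Out(S^n)$) is finite.

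It remains to bound $S$ in case (ii). Rather than invoking the classification of finite simple groups (via the fact that finite simple groups of bounded exponent form a finite set), I would prove the elementary claim: for any finite group $D$ and any $m\geq 1$, the non-abelian simple subquotients of any subgroup of $D^m$ are non-abelian simple subquotients of $D$. This follows by induction on $m$ from the standard observation that if $T=H_1/N_1$ is simple and $K\triangleleft H_1$ then $T$ is a quotient of either $K$ or $H_1/K$, applied with $K=H_1\cap (\{1\}\times D^{m-1})$ (noting $H_1/K\cong \pi_1(H_1)\leq D$). Embedding $\prod_j C_j^{n_j}$ into $(\prod_j C_j)^{\max_j n_j}$, this shows that $S$ is a non-abelian simple subquotient of the fixed finite group $\prod_j C_j$, hence lies in a finite set. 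Combining the two cases yields the desired finiteness. The main subtlety is the subquotient bound on $S$, which circumvents the classification of finite simple groups by leveraging the direct product structure.
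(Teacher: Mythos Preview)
Your proposal is correct and follows essentially the same route as the paper: reduce to $G_i\cong S^n$ for a finite simple group $S$, bound $S$ by observing it is a Jordan--H\"older factor (equivalently, a simple subquotient) of a product of groups in $\mathcal C$, treat the abelian case via simple $\mathbb F_p[H']$-modules, and in the non-abelian case bound $n\le |H'|$ by transitivity of the induced $H'$-action on the factors. The only cosmetic differences are that the paper obtains $G_i\cong S^n$ via a characteristic-subgroup argument rather than minimal normal subgroups, and simply cites the Jordan--H\"older theorem where you spell out the elementary subquotient lemma.
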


\begin{proof}This is \cite[middle of p. 22]{LWZ}. We review its short proof here:

Consider such a $G_i$.  Let $G_i'$ be a finite simple group that is a quotient of $G_i$. We must have $G_i$ isomorphic as a group to a power of $G_i'$, since otherwise the  intersection of the kernels of all surjections $G_i \to G_i'$ would be a nontrivial proper characteristic subgroup, hence a nontrivial proper $H \rtimes \Gamma$-invariant normal subgroup.

Because $G_i'$ is a Jordan-H\"{o}lder factor of a level-$\mathcal C$ $\Gamma$-group, it is a Jordan-H\"{o}lder factor of a subquotient of a product of elements of $\mathcal C$, and this it must be a Jordan-H\"{o}lder factor of an element of $\mathcal C$. Let us fix one such Jordan-H\"older factor $G_i'$, and show that powers of it give finitely many isomorphism classes. Because there are only finitely many Jordan-H\"older factors of the finitely many groups in $\mathcal C$, this means finitely many isomorphism classes overall, as desired.

First consider the case where $G_i'$ is abelian simple, thus isomorphic to $\mathbb F_p$ for some prime $p$. Then any power $G_i$ of $G_i'$ is a vector space over $\mathbb F_p$. Because $G_i$ is abelian, its outer automorphism group is its automorphism group. Thus, we can describe $G_i$ as a vector space over $\mathbb F_p$ with an action of $H'$, or in other words a $\mathbb F_p [ H']$-module. Because there are finitely many isomorphism classes of simple $\mathbb F_p [ H']$-modules, there are finitely many such $G_i$.

Next consider the case when $G_i'$ is non-abelian simple. In this case, $\Out ((G_i')^n) = \Out(G_i')^n \rtimes S_n$. Given a homomorphism from $H'$ to $\Out(G_i')^n \rtimes S_n$, if the associated $[H']$-group is simple then the image of $H'$ in $S_n$ must be transitive, because otherwise we could split $G_i$ into a product of two $[H']$-groups corresponding to two orbits. Hence $n$ is at most $|H'|$, the maximal size of a transitive $H'$-action. Because $n$ is bounded, there are only finitely homomorphisms, thus only finitely many isomorphism classes of $G_i$, as desired.
\end{proof}

\begin{defi} Let $G_1,\dots, G_m$ be pairwise non-isomorphic representatives of the finitely many isomorphism classes discussed in Lemma \ref{finitely-many-isomorphism-classes}.  \end{defi}

\begin{lemma}\label{Q-nice-description} For $G$ a finite level-$\mathcal C$ $\Gamma$-group and $\pi: G \to H$ a homomorphism, we have an isomorphism of $[H']$-groups  \[ Q( \ker \pi) \cong \prod_{i=1}^m G_i ^{e_i} \]  for some $e_1,\dots, e_m \in \mathbb N$. \end{lemma}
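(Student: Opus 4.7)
The plan is to combine Lemma \ref{Q-finite-simple} with Lemma \ref{finitely-many-isomorphism-classes}. Lemma \ref{Q-finite-simple} already tells us that $Q(\ker \pi)$ is a product of finite simple $[H']$-groups, and because $\ker \pi$ is finite the product is finite. The only remaining task is therefore to show that each simple factor of $Q(\ker \pi)$ is $[H']$-isomorphic to one of the distinguished representatives $G_1,\dots,G_m$.

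To do this, I would set $K = \ker \pi$, let $S$ be any simple $[H']$-group appearing in the product decomposition of $Q(K)$, and let $M \subset K$ be the kernel of the composition $K \twoheadrightarrow Q(K) \twoheadrightarrow S$. By construction $M$ is a maximal proper $H'$-invariant normal subgroup of $K$ with $K/M \cong S$ as $[H']$-groups. The crucial step is to upgrade $M$ from being normal in $K$ to being normal in all of $G$, and also $\Gamma$-invariant. For any $g \in G$, conjugation $c_g$ restricts to an automorphism of $K$ whose class in $\Out(K)$ is $\pi(g) \in H \subseteq H'$; since $M$ is normal in $K$, this class acts on $M$ independently of the chosen representative, and $H'$-invariance of $M$ forces $c_g(M) = M$. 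The $\Gamma$-action on $G$ preserves $K$ and descends to an honest automorphism of $K$ in the class of $\Gamma \subset H'$, which likewise preserves $M$ by $H'$-invariance.

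Once $M$ is normal in $G$ and $\Gamma$-invariant, $G/M$ is a $\Gamma$-group fitting in an extension
\[ 1 \to S \to G/M \to H \to 1 \]
compatible with the outer-automorphism actions of $H$ and $\Gamma$ on $S$. Since $G$ is level-$\mathcal C$, so is its quotient $G/M$, so $S$ satisfies exactly the hypothesis of Lemma \ref{finitely-many-isomorphism-classes} and is therefore $[H']$-isomorphic to some $G_i$. Grouping the finitely many simple factors of $Q(K)$ by isomorphism type yields the desired decomposition $Q(\ker \pi) \cong \prod_{i=1}^m G_i^{e_i}$.

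I expect the one piece that needs care, rather than serious work, is to check that the $[H']$-structure on $S$ arising from its appearance as a factor of $Q(K)$ really coincides with the $[H']$-structure on $S$ read off from the extension $1 \to S \to G/M \to H \to 1$. This is essentially a bookkeeping matter: both structures are defined from conjugation by elements of $G$ via the same rule used to define the $[H']$-structure on $K$ in the first place, but it is worth writing out explicitly that a lift of $h \in H$ to $G/M$ acts on $S$ by the image of the conjugation-by-$G$ action on $K$ modulo $M$, matching the construction of the $H'$-action on $K$ by outer automorphisms.
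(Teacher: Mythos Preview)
Your proposal is correct and follows essentially the same approach as the paper: invoke Lemma \ref{Q-finite-simple}, then for each simple factor $S$ of $Q(\ker\pi)$ take the kernel $M$ of $K\twoheadrightarrow S$, argue it is normal and $\Gamma$-invariant in $G$, and conclude via Lemma \ref{finitely-many-isomorphism-classes} applied to the extension $1\to S\to G/M\to H\to 1$. In fact you spell out more carefully than the paper does why $H'$-invariance of $M$ in $K$ forces normality in $G$, and your closing remark about checking the two $[H']$-structures on $S$ agree is a point the paper leaves implicit.
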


\begin{proof} By Lemma \ref{Q-finite-simple}, $Q(\ker \pi)$ is a product of finite simple $[H']$-groups, so it suffices by definition to show that for each factor $G'$ in the product, there exists an extension $1 \to G' \to G^*\to H \to 1$ of $\Gamma$-groups compatible with the actions of $H$ and $\Gamma$, where $G^*$ is a level-$\mathcal C$ $\Gamma$-group. To do this, observe that $Q(\ker\pi)$ is the product of $G'$ with some other groups, so $G'$ is a quotient of the $Q(\ker \pi)$ by a $H'$-invariant normal subgroup $Z$. Because $Z$ is $H' = H \rtimes \Gamma$-invariant, $Z$ remains normal and $\Gamma$-invariant as a subgroup of $G$, and $G/Z$ is the desired $G^*$. The quotient $G^*$ is level-$\mathcal C$ because the class of level-$\mathcal C$ $\Gamma$-groups is closed under quotients. \end{proof}

Now that we understand the image of the map $Q$, we can define for each measure $\mu$ on level-$\mathcal C$ $\Gamma$-groups a localized measure $\mu^H$.

\begin{defi} For $\mu$ a measure on the set of isomorphism classes of finite level-$\mathcal C$ $\Gamma$-groups, and $H$ a finite level-$\mathcal C$ $\Gamma$-group, define a measure $\mu^H$ on the set of isomorphism classes of $[H']$-groups of the form $ \prod_{i=1}^m G_i ^{e_i} $ by
\[ \mu^H (E) =  \int \left| \{ \pi : X \to H \textrm{ surjective} \mid Q( \ker \pi) \cong E \} \right| d\mu(X)  .\]
\end{defi} 

\begin{lemma}\label{mu-H-formula}  \[\mu^H(1) = \left| \Aut(H)\right|  \mu(H) \]  \end{lemma}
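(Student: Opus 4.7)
The plan is to unwind the definition of $\mu^H$ and observe that, when restricted to $E = 1$ (the trivial $[H']$-group, corresponding to all exponents $e_i = 0$), the surjections being counted are exactly the $\Gamma$-isomorphisms $X \to H$. Once that is established, the formula follows from counting isomorphisms for fixed $X$.

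The key observation I would prove first is: for a finite $[H']$-group $T$, we have $Q(T) \cong 1$ if and only if $T = 1$. The ``if'' direction is immediate from the convention that the intersection over an empty family of subgroups equals $T$ itself. For the ``only if'' direction, suppose $T \neq 1$. Then $\{1\}$ is a proper $H'$-invariant normal subgroup of $T$, so the set of proper $H'$-invariant normal subgroups is nonempty, and by finiteness it has a maximal element $M \subsetneq T$. The intersection $N$ defining $Q(T) = T/N$ is contained in this $M$ and is therefore proper, so $Q(T) \neq 1$. (Alternatively, one could invoke Lemma \ref{Q-finite-simple} and observe that $Q(T) = 1$ would force $T$ to have no finite simple $[H']$-quotient, which for finite nontrivial $T$ is impossible.)

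Substituting this equivalence into the definition of $\mu^H$ gives
\[ \mu^H(1) = \int \left|\{ \pi : X \to H \text{ surjective} \mid \ker \pi = 1 \}\right| d\mu(X) = \int \left|\{ \pi : X \to H \text{ isomorphism}\}\right| d\mu(X). \]
For each fixed $X$, the inner set is either empty (if $X \not\cong H$) or a torsor under $\Aut(H)$ acting by post-composition, hence has size $|\Aut(H)|$ exactly when $X \cong H$. Therefore the integral equals $|\Aut(H)| \cdot \mu(H)$, as claimed.

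There is no substantive obstacle: once one notices that $Q(T) = 1$ collapses to $T = 1$ for finite $T$, everything else is a routine unpacking of the definitions of $\mu^H$ and $\Aut(H)$.
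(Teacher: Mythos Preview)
Your proof is correct and follows the same approach as the paper's own argument: reduce $Q(\ker\pi)\cong 1$ to $\ker\pi=1$, then count $\Gamma$-isomorphisms $X\to H$. You in fact give more detail than the paper does on why $Q(T)=1$ forces $T=1$; the paper simply asserts that $\ker\pi$ trivial is equivalent to the displayed condition.
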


\begin{proof}   A surjection $X \to H$ is an isomorphism if and only if its kernel is trivial, so \[\left|\{ \pi: X \to H \textrm{ surjective} \mid  Q(\ker \pi) \cong 1 \} \right|\] is the number of isomorphisms of $X$ with $H$, and thus vanishes for $X \neq H$ and equals $|\Aut(H)|$ for $X= H$.
\end{proof}

Next, we will prove a couple lemmas to compare the moments of $\mu^H$ and the moments of $\mu$. The first will count surjections from $Q(\ker \pi)$ to $F$ using a sum over exact sequences, the second will bound the number of exact sequences, and the third will use the count of surjections to compare the moments.

\begin{defi} For $a \colon F\to G$ and $b \colon G \to H$ two maps forming an exact sequence of groups $1 \to F \to G \to H \to 1$, let $\Aut_{F,H}(G)$ denotes the group of automorphisms $\sigma$ of $G$ such that $\sigma \circ a =a $ and $b \circ \sigma =b$. \end{defi}

\begin{lemma}\label{surjection-formula} Let $X$ and $H$ be finite level-$\mathcal C$ $\Gamma$-groups. Fix $k_1,\dots, k_m\in \mathbb N$ and let $F = \prod_{i=1}^m G_i^{k_i}$. The number of pairs of a surjection $\pi\colon X\to H$ of $\Gamma$-groups and a surjection $f\colon Q(\ker \pi) \to F$ of $[H']$-groups is equal to 
\[ \sum_{ \substack{1 \to  F \to G \to H\to 1} }\frac{  \Surj_\Gamma(X, G) }{ |\Aut_{F,H}(G)|}\]  where the sum is over isomorphism classes of exact sequences of $\Gamma$-groups compatible with the actions of $H$ and $\Gamma$ on $F$ by outer automorphisms. \end{lemma}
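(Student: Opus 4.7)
My plan is to establish a bijection between the pairs $(\pi, f)$ on one side and, on the other side, equivalence classes of pairs $(G, \phi)$, where $1 \to F \to G \to H \to 1$ is a compatible extension of $\Gamma$-groups and $\phi: X \to G$ is a $\Gamma$-surjection, with $(G, \phi_1) \sim (G, \phi_2)$ when $\phi_2 = \alpha \circ \phi_1$ for some $\alpha \in \Aut_{F,H}(G)$. Granting such a bijection, $\Aut_{F,H}(G)$ acts freely on $\Surj_\Gamma(X, G)$ by post-composition (any $\alpha$ fixing a surjection is the identity on its image), so each isomorphism class of extension $G$ contributes exactly $\Surj_\Gamma(X, G)/|\Aut_{F,H}(G)|$ pairs and the claimed formula follows by summing.

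For the forward direction, from $(G, \phi)$ I would set $\pi = p \circ \phi$, where $p: G \to H$ is the extension projection, and construct $f$ from $\phi|_{\ker \pi}: \ker \pi \to F$. This restriction lands in $F = \ker p$ and is surjective onto $F$. It is an $[H']$-homomorphism because the outer $H'$-action on $\ker \pi$ is realized by conjugation by lifts in $X$ and the outer $H'$-action on $F$ is realized by conjugation by lifts in $G$, and $\phi$ intertwines these conjugations. Since $F$ is a product of finite simple $[H']$-groups, preimages in $\ker \pi$ of maximal proper $H'$-invariant normal subgroups of $F$ are themselves maximal proper $H'$-invariant normal in $\ker \pi$, so the kernel of $\phi|_{\ker \pi}$ contains the intersection defining $Q(\ker \pi)$ and the map descends uniquely to a surjection $f: Q(\ker \pi) \to F$.

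For the inverse, given $(\pi, f)$ I would let $K \subseteq \ker \pi$ be the kernel of the composite $\ker \pi \twoheadrightarrow Q(\ker \pi) \xrightarrow{f} F$. Since $K$ is the kernel of an $[H']$-homomorphism it is $H'$-invariant in $\ker \pi$, hence preserved by every lift to $\Aut(\ker \pi)$ of every element of $H'$; in particular it is preserved by inner conjugation by each $x \in X$ (which is a lift of $\pi(x) \in H$) and by $\Gamma$, so $K$ is a $\Gamma$-invariant normal subgroup of $X$. Then $G := X/K$ is a $\Gamma$-group fitting into a compatible extension $1 \to F \to G \to H \to 1$, and the quotient $\phi: X \to G$ is the required $\Gamma$-surjection. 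What remains is to check that these constructions are mutually inverse, and that two $\Gamma$-surjections $\phi_1, \phi_2: X \to G$ yield the same $(\pi, f)$ iff they differ by $\Aut_{F,H}(G)$ — having equal $\pi$ forces the difference to induce identity on $H$, and having equal $f$ further pins it down to act as the identity on $F$. The main technical obstacle I anticipate is the bookkeeping around matching the $[H']$-structures: verifying that the outer actions induced on $\ker \pi$ from $X$ really coincide with those inherited on $F$ from the extension $G = X/K$, and that the factorization through $Q$ interacts correctly with the automorphism action on both sides.
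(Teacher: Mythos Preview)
Your proposal is correct and follows essentially the same route as the paper: both construct the bijection between pairs $(\pi,f)$ and $\Aut_{F,H}(G)$-orbits of $(G,\phi)$ by taking $G=X/\ker(\ker\pi\to F)$ in one direction and $\pi=p\circ\phi$, $f=\phi|_{\ker\pi}$ in the other, then invoke freeness of the $\Aut_{F,H}(G)$-action to get the count. Your justification that $K$ is normal and $\Gamma$-invariant in $X$ (via conjugation lifts of $H'$) is in fact more explicit than the paper's, which simply asserts the inverse construction works.
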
 

\begin{proof} First note that we can equivalently take $f\colon \ker \pi \to F$, because every map $f\colon \ker \pi \to F$ of $[H']$-groups factors uniquely through the projection $\ker \pi \to Q(\ker \pi)$. Indeed, the kernel of any surjection to a finite simple $[H']$-group is a maximal proper $H'$-invariant normal subgroup. Hence, the kernel of the surjection $f$ to a product of finite simple $[H']$-groups contains the intersection of all maximal proper $H'$-invariant normal subgroups. Thus, $f$ factors uniquely through the projection.

We will deduce this identity from a bijection between two sets. The first, $S_1$, is the set of pairs of a surjection $\pi:  X\to H$ of $\Gamma$-groups and a surjection $f: Q(\ker \pi) \to F$ of $[H']$-groups.

The second, $S_2$, is the set of isomorphism classes of a pair of an exact sequence $1 \to F \to G\to H \to 1$ compatible with the actions of $H$ and $\Gamma$ on $F$ by outer automorphisms and a surjection of $\Gamma$-groups $X \to G$. Equivalently, we can write $S_2$ as the disjoint union over isomorphism classes of exact sequences $1 \to F \to G \to H\to 1$ of the set of $\Gamma$-surjections modulo the action of the automorphisms $\Aut_{F,H}(G)$ of the exact sequence.  Because automorphisms of $G$ act freely on surjections $X \to G$, the size of $S_2$ is $\sum_{ \substack{1 \to  F \to G \to H\to 1} }\frac{  \Surj_\Gamma(X, G) }{ \Aut_{F,H}(G)}$, so it suffices to find the bijection.

We first construct a map from $S_2$ to $S_1$.

Given $G$ and a surjection $u\colon X\to G$, we obtain a surjection $\pi\colon X \to H$ by composition and a surjection $f\colon \ker \pi \to F$ by restricting $u$ to $\ker \pi$ and noting that its image is the kernel of the natural map $G \to H$, which is $F$. This is described by the following commutative diagram, where the square is Cartesian.
\[ \begin{tikzcd}
\ker \pi \arrow[d,"f"] \arrow[r] & X \arrow[d,"u"] \arrow[dr,"\pi"] \\
F \arrow[r] & G \arrow[r] & H \\
\end{tikzcd}\]

Because $\pi$ is a composition of two $\Gamma$-equivariant maps, it is $\Gamma$-equivariant. Because any lift of an element of $H \rtimes \Gamma$ to an automorphism of $\ker \pi$ is the action by conjugation of an element  $x \in X$ times the action of an element $\gamma \in \Gamma$, we can find a corresponding automorphism of $F$ by applying $\gamma$ and then conjugating by $u(x)$, so $f$ is an $[H']$-homomorphism.

Composing $u$ with an automorphism of $G$ fixing the inclusion of $F$ and the projection onto $H$ preserves $\pi$ and $f$.  This defines a map from $S_2$ to $S_1$.

We now find the inverse map from $S_1$ to $S_2$. 

To do this, define $G =  X/ \ker f $. Take $G \to H$ to be the projection $X/\ker f \to X/\ker \pi $. Take $F \to G$ to be the inclusion $\ker \pi/\ker f \to X /\ker f $. Take $X \to G$ the quotient map $X \to X/\ker f = G$.  These are all maps of $\Gamma$-groups, they form an exact sequence, and it is not hard to check these operations are inverses.\end{proof}

\begin{lemma}\label{extension-counting-bound} For $F = \prod_{i=1}^m G_i^{k_i}$, the number of isomorphism classes of extensions of $\Gamma$-groups $1 \to F \to G \to H \to 1$, such that the $\Gamma$-action of $F$ and the action of $H$ on $G$ by outer automorphisms are both compatible with the $H \rtimes \Gamma$-structure of $F$, is $O \left( O(1)^{\sum_{i=1}^m k_i}\right) $, where the constants depend only on $\Gamma$ and $H$. \end{lemma}

\begin{proof} An extension of $\Gamma$-groups $1 \to F \to G \to H\to 1$ is equivalent to an extension of groups $1 \to F \to G \rtimes \Gamma \to H \rtimes \Gamma\to 1$ where $G$ maps to $H$ and $\Gamma$ maps to $\Gamma$. To classify such extensions, fix for each element $h\in H \rtimes \Gamma$ a lift $\sigma_h$ of the associated outer automorphism of $F$ to an automorphism of $F$.

Then, given such an extension, choose for each $h\in H \rtimes \Gamma $ an element $\alpha_h \in G\rtimes \Gamma$ whose image in $H \rtimes \Gamma$ is $h$ and whose action by conjugation on $F$ is by $\sigma_h$. This is possible as we can adjust the conjugation action by an inner automorphism by multiplying by an appropriate element of $F$.

Because projection to $H \rtimes \Gamma $ is a group homomorphism with kernel $F$, there exists for each $h_1,h_2 \in H \rtimes \Gamma$ an element  $f_{h_1,h_2} \in F$ such that $\alpha_{h_1} \alpha_{h_2}= f_{h_1,h_2} \alpha_{h_1h_2}$. 

We can express each element of $G\rtimes \Gamma$ as $f \alpha_h$ for some $f\in F$, and we have 
\[ f_1 \alpha_{h_1} f_2 \alpha_{h_2} = f_1 (\alpha_{h_1} f_2 \alpha_{h_1}^{-1}) \alpha_{h_1} \alpha_{h_2} = f_1 \sigma_{h_1}(f_2)  f_{h_1 ,h_2} \alpha_{h_1h_2} \]
so the multiplication table of $G \rtimes \Gamma$, expressed this way, is determined by the elements $f_{h_1,h_2}$.  To describe the group with this multiplcation table, which comes equipped with a projection to $\Gamma$, as a semidirect product $G \rtimes \Gamma$, we need to fix a subgroup that maps isomorphically $\Gamma$ under the projection $H \rtimes \Gamma$. This requires fixing a lift of each element of $\Gamma$, which represents at most $|F|^{\Gamma}$ additional choices. 

Thus, the number of possible isomorphism classes of exact sequences is at most \[|F|^{ ( |H \rtimes \Gamma|^2 + |\Gamma| ) } = \prod_{i=1}^m |G_i|^{ k_i ( |H \rtimes \Gamma|^2 + |\Gamma| ) } = O(1)^{ \sum_{i=1}^m k_i}.\] \end{proof}

We can likely express the data $\alpha_{h_1h_2}$ in this proof using group cohomology to get a more precise count, but this isn't necessary for the bound.

\begin{lemma}\label{surjection-integral-formula} For $\mu$ a measure on the set of isomorphism classes of finite level-$\mathcal C$ $\Gamma$-groups, $H$ a finite level-$\mathcal C$ $\Gamma$-group, and $F = \prod_{i=1}^m G_i^{k_i}$, we have
\[ \int \Surj_{[H']}  (E, F)  d\mu^H (E) =  \sum_{ 1 \to F \to G \to H \to 1} \int  \Surj_{\Gamma} (X, G)    d\mu(X) / \Aut_{F,H}(G).   \] \end{lemma}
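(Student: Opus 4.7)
The plan is to derive this identity from Lemma \ref{surjection-formula} by integrating both sides against $\mu$ and unfolding the definition of $\mu^H$. Since every quantity in sight is nonnegative, I can freely interchange sums and integrals by Tonelli's theorem, and essentially all the work is bookkeeping.

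First, I would write the left-hand side as a sum over the countable set of isomorphism classes $E$ of $[H']$-groups of the form $\prod_{i=1}^m G_i^{e_i}$:
\[ \int \Surj_{[H']}(E,F)\, d\mu^H(E) = \sum_E \Surj_{[H']}(E,F)\, \mu^H(E). \]
Unfolding the definition of $\mu^H(E)$ and then interchanging the sum over $E$ with the integral over $X$ (justified by nonnegativity), this becomes
\[ \int \sum_E \Surj_{[H']}(E,F)\, \bigl|\{\pi : X \to H \text{ surjective} \mid Q(\ker\pi) \cong E\}\bigr|\, d\mu(X). \]
For each fixed $X$, grouping by the isomorphism class of $Q(\ker\pi)$ shows that the inner sum counts exactly the pairs $(\pi, f)$ in which $\pi : X \to H$ is a $\Gamma$-surjection and $f : Q(\ker\pi) \to F$ is an $[H']$-surjection. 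By Lemma \ref{surjection-formula}, this count equals $\sum_{1 \to F \to G \to H \to 1} \Surj_\Gamma(X, G)/\Aut_{F,H}(G)$.

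Finally, by Lemma \ref{extension-counting-bound} the sum over extensions has only finitely many terms (with a bound independent of $X$), so I can pull it back outside the integral over $X$ to obtain the desired right-hand side. The main conceptual step is recognizing the inner sum over $E$ as precisely the quantity counted by Lemma \ref{surjection-formula}; once that identification is made, there is no real obstacle, and what remains are routine applications of Tonelli together with the fact that $Q(\ker\pi)$ has the form $\prod_i G_i^{e_i}$ by Lemma \ref{Q-nice-description}, which ensures the sum over $E$ matches the support of $\mu^H$.
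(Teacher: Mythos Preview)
Your proof is correct and follows essentially the same route as the paper: unfold the definition of $\mu^H$, recognize the resulting double sum as the count of pairs $(\pi,f)$ handled by Lemma~\ref{surjection-formula}, and then swap the (finite, by Lemma~\ref{extension-counting-bound}) sum over extensions with the integral. You are slightly more explicit than the paper in invoking Tonelli for the first interchange and in citing Lemma~\ref{Q-nice-description} to match the support of $\mu^H$, but the argument is the same.
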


\begin{proof} We have \[ \int \Surj_{[H']}  (E, F)  d\mu^H (E) = \int  \sum_ {  \pi \colon X \to H \textrm{ surjective}}  \Surj_{[H']} ( Q(\ker \pi), F)   d\mu(X)  \] 
\[ = \int   \sum_{ 1 \to F \to G \to H \to 1}  \frac{  \Surj_{\Gamma} (X, G) }{  \Aut_{F,H}(G)}  d\mu(X)   = \sum_{ 1 \to F \to G \to H \to 1} \int  \Surj_{\Gamma} (X, G)    d\mu(X) / \Aut_{F,H}(G). \] where the first identity is by definition, the second is Lemma \ref{surjection-formula}, and the third exchanges the integral with a sum which, by Lemma \ref{extension-counting-bound}, is finite. 
\end{proof}

Now we are finally ready to prove the main theorem:

\begin{theorem}\label{main-bound-group} Let $\Gamma$ be a finite group. Let $\mathcal C$ be a finite set of finite $\Gamma$-groups. 
Let $\mu$ be a measure on the set of isomorphism classes of finite level-$\mathcal C$ $\Gamma$-groups. Let $\mu_t$ be a sequence of measures on the same set. Assume that, for $H$ a finite level-$\mathcal C$ $\Gamma$-group, we have
\begin{equation}\label{assumption-limit} \lim_{t \to \infty} \int \Surj_\Gamma(X, H) d\mu_t(X)  = \int \Surj_\Gamma(X, H) d\mu(X) .\end{equation}
and
\begin{equation}\label{assumption-bound} \int \Surj_\Gamma(X, H) d\mu(X)  = O (   |H|^{O(1) }) \end{equation}
Then for all finite level-$\mathcal C$ $\Gamma$-groups $H$ we have \begin{equation}\label{groups-conclusion} \lim_{t \to \infty} \mu_t(H) = \mu(H) .\end{equation}

\end{theorem}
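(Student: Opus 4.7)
My plan is to reduce Theorem~\ref{main-bound-group} to Lemma~\ref{flat-case} applied to the localized measures $\mu^H$ and $\mu_t^H$ on the set of isomorphism classes of $[H']$-groups of the form $\prod_{i=1}^m G_i^{e_i}$. By Lemma~\ref{mu-H-formula}, $\mu^H(1) = |\Aut(H)| \, \mu(H)$, and analogously $\mu_t^H(1) = |\Aut(H)|\,\mu_t(H)$, so the desired conclusion \eqref{groups-conclusion} is equivalent to $\lim_{t \to \infty} \mu_t^H(1) = \mu^H(1)$, which is exactly what Lemma~\ref{flat-case} delivers once its two hypotheses are verified.

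To verify the moment-convergence hypothesis \eqref{flat-limit}, fix $k_1,\dots,k_m$ and set $F = \prod_{i=1}^m G_i^{k_i}$. Lemma~\ref{surjection-integral-formula} rewrites the $F$-moment of $\mu^H$ (respectively $\mu_t^H$) as
\[ \sum_{1 \to F \to G \to H \to 1} \int \Surj_{\Gamma}(X,G)\, d\mu(X) \,/\, |\Aut_{F,H}(G)|, \]
a finite sum by Lemma~\ref{extension-counting-bound}. Moreover, the integrand vanishes whenever $G$ fails to be level-$\mathcal{C}$, since any $X$ in the support of $\mu$ or $\mu_t$ is level-$\mathcal{C}$ and quotients of level-$\mathcal{C}$ $\Gamma$-groups are level-$\mathcal{C}$. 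Thus only level-$\mathcal{C}$ extensions $G$ contribute, and applying the theorem's hypothesis \eqref{assumption-limit} to each such $G$ separately (of which there are finitely many) gives \eqref{flat-limit} by summing.

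For the polynomial bound \eqref{flat-bound}, apply the same formula to $\mu$: each integrand is $O(|G|^{O(1)})$ by \eqref{assumption-bound}, and since $|G| = |F| \cdot |H|$ with $H$ fixed and $|F| = \prod_i |G_i|^{k_i} \leq \bigl(\max_i |G_i|\bigr)^{\sum_i k_i}$, this is $O\bigl(O(1)^{\sum_i k_i}\bigr)$. Combining with the $O\bigl(O(1)^{\sum_i k_i}\bigr)$ bound on the number of extensions from Lemma~\ref{extension-counting-bound} (and using $|\Aut_{F,H}(G)| \geq 1$) yields exactly the required bound. Both hypotheses of Lemma~\ref{flat-case} are then in hand, and its conclusion combined with Lemma~\ref{mu-H-formula} finishes the proof.

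The main work has already been done by the preceding lemmas — the inclusion-exclusion in Lemma~\ref{c-k-exists}, the one-variable reconstruction in Lemma~\ref{one-variable}, the inductive passage through simple factors in Lemma~\ref{inductive-step}, and the bookkeeping of extensions in Lemmas~\ref{surjection-formula} and \ref{extension-counting-bound}. The only point requiring any genuine care is confirming that the hypothesis \eqref{assumption-limit}, which is stated only for level-$\mathcal{C}$ target groups, can be applied to the extensions $G$ arising in Lemma~\ref{surjection-integral-formula}; this is handled by the quotient-closure observation above. Everything else is a matter of tracking constants.
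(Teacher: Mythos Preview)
Your proof is correct and follows essentially the same route as the paper's: verify the hypotheses of Lemma~\ref{flat-case} for $\mu^H$ and $\mu_t^H$ via Lemma~\ref{surjection-integral-formula} and Lemma~\ref{extension-counting-bound}, then invoke Lemma~\ref{mu-H-formula}. Your explicit remark that non-level-$\mathcal{C}$ extensions $G$ contribute zero (so that hypothesis~\eqref{assumption-limit} need only be invoked for level-$\mathcal{C}$ targets) is a point the paper leaves implicit, and it is good that you spelled it out.
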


\begin{proof}

First, we will check that the hypotheses of Lemma \ref{flat-case} apply to the measures $\mu^H$ and $\mu^H_t$.

Let $F = \prod_{i=1}^m G_i^{k_i}$.

By applying Lemma \ref{surjection-integral-formula} to $\mu$ and $\mu_t$, we have 
\[\lim_{t \to \infty}  \int \Surj_{[H']}  (E, F) d \mu_t^H (E)  = \lim_{t\to \infty} \sum_{ 1 \to F \to G \to H \to 1} \int  \Surj_\Gamma (X, G)    d\mu_t(X) / \Aut_{F,H}(G)\]\[ =  \sum_{ 1 \to F \to G \to H \to 1}  \lim_{t\to \infty} \int  \Surj_\Gamma (X, G)    d\mu_t(X) / \Aut_{F,H}(G)\]\[=  \sum_{ 1 \to F \to G \to H \to 1} \int  \Surj_\Gamma(X, G)    d\mu(X) / \Aut_{F,H}(G)  =  \int \Surj_{[H']} (E, F) d \mu^H (E) \] where we use the finiteness of the sum, from Lemma \ref{extension-counting-bound}, to exchange it with a limit and also our assumption \eqref{assumption-limit} to calculate the limit. This verifies the assumption  \eqref{flat-limit} of Lemma \ref{flat-case}.

By Lemma \ref{surjection-integral-formula} and assumption \eqref{assumption-bound}  we have\[ \int \Surj_{[H']}  (E, F)  d\mu^H (E) =  \sum_{ 1 \to F \to G \to H \to 1} \int  \Surj_{\Gamma} (X, G)    d\mu(X) / \Aut_{F,H}(G)\] \[= \sum_{ 1 \to F \to G \to H \to 1}  O ( |G|^{O(1)} ).   \] 

Because $|G| = |F| |H| =|H| \prod_{i=1}^{m} |G_i|^{k_i} = O \left( O(1)^{\sum_{i=1}^m k_i}\right)$, and by Lemma \ref{extension-counting-bound} the number of terms is $O \left( O(1)^{\sum_{i=1}^m k_i}\right)$, the total sum is $O \left( O(1)^{\sum_{i=1}^m k_i}\right)$, giving the assumption \eqref{flat-bound} of \eqref{flat-case}.

So we may apply Lemma \ref{flat-case}, obtaining \[ \lim_{t \to \infty} \mu^H_t(1) = \mu^H(1), \] which by Lemma \ref{mu-H-formula}, applied to both $\mu$ and $\mu_t$, gives our desired \eqref{groups-conclusion}.  \end{proof}

\begin{proof}[Proof of Theorem \ref{main-function-field-theorem}]
Fix $\Gamma$ and $\mathcal C$, and for $G$ any level-$\mathcal C$ $\Gamma$-group, let    \[ \mu_{q, n} ( X) =  \frac{ \sum_{ d=0}^{n}   \left|\left \{ K \in E_{\Gamma}(d,q)\mid  \Gal(K^{\#} /K)^{\mathcal C} \cong X \right\} \right| } { \sum_{ d=0}^{n} \left|E_{\Gamma}(d,q) \right| } \] 
and \[ \mu(X) =  \mu_{\Gamma} (U_{\mathcal C, X})  \]
for $\mu_{\Gamma}$ the measure defined in \cite{LWZ} and $U_{\mathcal C, X}$ is the set of all profinite $\Gamma$-groups whose pro-$\mathcal C$ completion is $\Gamma$-isomorphic to $X$.

We have
\begin{equation}\label{ff-moment-unfolding}\begin{split}  \int \Surj_\Gamma (X, H) d\mu_{q, n} (X)   = \sum_{X \textrm{ level-}\mathcal C\textrm{ }\Gamma\textrm{-group}}  \frac{ \sum_{ d=0}^{n}   \left|\left \{ K \in E_{\Gamma}(d,q)\mid  \Gal(K^{\#} /K)^{\mathcal C} \cong X \right\} \right| \Surj_\Gamma(X, H)  } { \sum_{ d=0}^{n} \left|E_{\Gamma}(d,q)\right| } \\
 =  \frac{ \sum_{ d=0}^{n}   \sum_{\substack{ K \in E_{\Gamma}(d,q) }} \Surj_\Gamma(\Gal(K^{\#} /K)^{\mathcal C} , H)  } { \sum_{ d=0}^{n} \left|E_{\Gamma}(d,q)\right| } =  \frac{ \sum_{ d =0}^{n}   \sum_{\substack{ K \in E_{\Gamma}(d,q) }} \Surj_\Gamma(\Gal(K^{\#} /K) , H)  } { \sum_{ d=0}^{n} \left|E_{\Gamma}(d,q)\right| } \end{split}\end{equation}
so 
\begin{equation}\label{ff-case-limit} \begin{split} &\lim_{n \to \infty} \lim_{\substack{ q \to \infty \\ \gcd(q, |\Gamma| |\mathcal C|)=1 \\ \gcd(q-1, |\mathcal C|)=1 }}  \int \Surj_\Gamma(X, H) d\mu_{q, n} (X) \\
=&\lim_{n \to \infty} \lim_{\substack{ q \to \infty \\ \gcd(q, |\Gamma| |\mathcal C|)=1 \\ \gcd(q-1, |\mathcal C|)=1 }}  \frac{ \sum_{d=0}^{n}   \sum_{\substack{ K \in E_{\Gamma}(d,q) }} \Surj_\Gamma(\Gal(K^{\#} /K) , H)  } { \sum_{ d=0}^{n} \left|E_{\Gamma}(d,q)\right| } 
  =\int \Surj_{\Gamma}(G, H)  d\mu_{\Gamma}(G) \\ & = \int \Surj_\Gamma (G^{\mathcal C}, H) d \mu_{\Gamma }(G) = \int \Surj_{\Gamma} (X, H) d\mu (X) , \end{split} \end{equation} where the first identity is \eqref{ff-moment-unfolding}, the second identity is \cite[Theorem 1.4]{LWZ}, the third identity is because $H$ is level-$\mathcal C$, and the last identity follows from the definition of $\mu$ in terms of $\mu_{\Gamma}$.

Furthermore, we have by \cite[Theorem 6.2]{LWZ}
\begin{equation}\label{ff-case-bound}  \int \Surj_{\Gamma} (X, H) d\mu (X)  = \int \Surj_{\Gamma}(G, H)  d\mu_{\Gamma}(G) = \frac{1}{[H : H^\Gamma] }=  O ( |H|^{O(1)} ). \end{equation}

Hence taking any sequence of $n$ and $q$ going to $\infty$, with $q$ satisfying the congruence conditions and growing sufficiently fast with respect to $n$, \eqref{ff-case-limit} and \eqref{ff-case-bound} verify the assumptions \eqref{assumption-limit} and \eqref{assumption-bound} of Theorem \ref{main-bound-group}. Hence we can apply Theorem \ref{main-bound-group}, and its conclusion implies our desired statement by the following Lemma~\ref{convergence-step}.\end{proof} 

\begin{lemma}\label{convergence-step} Let $F(n,q)$ be a function of variables $n,q$ where $n$ is a natural number and $q$ is restricted to some infinite subset of natural numbers. Assume, that, for all sequences $(n_i,q_i)$ of pairs with $q_i$ growing sufficiently fast with respect to $n_i$, $\lim_{i\to\infty} F(n_i, q_i ) =x$. Then \[ \limsup_{n\to\infty} \limsup_{q\to\infty} = \liminf_{n\to\infty} \liminf_{q\to\infty} F(n,q) =x.\]
\end{lemma}

\begin{proof} We have \[ \limsup_{n\to\infty} \limsup_{q\to\infty}  \geq \limsup_{n\to\infty} \liminf_{q\to\infty} F(n,q) \geq  \liminf_{n\to\infty} \liminf_{q\to\infty} F(n,q)\] so it suffices to prove \[  \liminf_{n\to\infty} \liminf_{q\to\infty} F(n,q) \geq x \geq  \limsup_{n\to\infty} \limsup_{q\to\infty}F(n,q)  \] and since those two equalities are exchanged by negating $F$ and $x$ it suffices to prove $x \geq  \limsup_{n\to\infty} \limsup_{q\to\infty}$. Suppose for contradiction that $x <  \limsup_{n\to\infty} \limsup_{q\to\infty}F(n,q) $. Then $ \limsup_{n\to\infty} \limsup_{q\to\infty}F(n,q) \geq  x+ \epsilon$ for some $\epsilon>0$. Thus for infinitely many $n$ we have $ \limsup_{q\to\infty} F(n,q)  \geq   x+ \epsilon/2$ and then for each of those $n$, for infinitely many $q$ we have $ F(n,q)  \geq    x+ \epsilon/4$.  We can thus take a sequence $n_1,n_2,\dots$ such that for each $i$ we have $F(n_i,q) \geq x+\epsilon/4$ for infinitely many $q$, and then choose $q_i$ among those $q$ growing arbitrarily large as a function of $n_i$, so we have $F(n_i,q_i) \geq x+ \epsilon/4$ for all $i$, contradicting $\lim_{i\to\infty} F(n_i, q_i ) =x$.\end{proof}

\section{Modules}

In this section, we assume that $R$ is an associative algebra such that there are finitely many finite simple $R$-modules, and we let $M_1,\dots M_m$ be representatives of these isomorphism classes, pairwise non-isomorphic.

 Let $I$ be the intersection in $R$ of the annihilators of the finite simple $R$-modules. Because $R/I$ is a product of finite simple algebras, every $R/I$-module is a product of finite simple $R$-modules, and so has the form $\prod_{i=1}^m M_i^{e_i}$ for $e_1,\dots e_m \in \mathbb N$.

\begin{lemma}\label{module-flat-case}
 Let $\tilde{\mu}$ be a measure on the set of isomorphism classes of modules $\prod_{i=1}^m M_i^{e_i}$ and let $\tilde{\mu}_t$ be a sequence of such measures. 

Assume that for all $k_1,\dots, k_m \in \mathbb N$ we have
\begin{equation}\label{module-flat-bound} \sum_{e_{1}, \dots ,e_m=0}^{\infty}  \Surj_{R} \left( \prod_{i=1}^m M_i^{e_i} , \prod_{i=1}^m M_i^{k_i}\right)  \tilde{\mu} \left(  \prod_{i=1}^m M_i^{e_i} \right)  = O \left( O(1)^{ \sum_{i=1}^m k_i } \right) \end{equation}
and
\begin{equation}\label{module-flat-limit}   \lim_{t \to \infty}  \sum_{e_{1}, \dots ,e_m=0}^{\infty}  \Surj_{R} \left( \prod_{i=1}^m M_i^{e_i} , \prod_{i=1}^m M_i^{k_i}\right)  \tilde{\mu}_t \left(  \prod_{i=1}^m M_i^{e_i} \right)  \end{equation}\[ =   \sum_{e_{1}, \dots ,e_m=0}^{\infty}  \Surj_{R} \left( \prod_{i=1}^m M_i^{e_i} , \prod_{i=1}^m M_i^{k_i}\right)  \tilde{\mu} \left(  \prod_{i=1}^m M_i^{e_i} \right).\]
Then for all $k_{j+1},\dots, k_m \in \mathbb N$ we have
\begin{equation}\label{module-flat-conc}  \lim_{t \to \infty} \tilde{\mu}_t ( 0 ) = \tilde{\mu} ( 0) . \end{equation}\end{lemma}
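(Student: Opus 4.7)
The plan is to translate the proof of Lemma \ref{flat-case} from the $[H']$-group setting to the module setting, noting that every simple $R$-module is ``abelian'' in the relevant sense, so that only the abelian branches of the earlier arguments are needed. I expect no serious obstacle, since every step is a direct translation, with no analogue of Lemma \ref{non-abelian-surjection-count} required.

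First I would establish the module analogues of the surjection-counting lemmas. By Schur's lemma, $\operatorname{End}_R(M_i)$ is a finite division ring, hence a finite field $\mathbb F_{h_i}$ by Wedderburn's little theorem, and $\operatorname{Hom}_R(M_i, M_j) = 0$ for $i \ne j$. This immediately yields
\[\Surj_R(M_i^e, M_i^k) = (h_i^e - 1)(h_i^e - h_i)\cdots(h_i^e - h_i^{k-1}),\]
the analogue of Lemma \ref{abelian-surjection-count}, and the splitting
\[\Surj_R\Bigl(\prod_{i=1}^m M_i^{e_i},\prod_{i=1}^m M_i^{k_i}\Bigr) = \prod_{i=1}^m \Surj_R(M_i^{e_i}, M_i^{k_i}),\]
the analogue of Lemma \ref{surjection-count-splitting}, since Schur forces every $R$-homomorphism between products of pairwise non-isomorphic simples to be block-diagonal.

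Second, with $h = h_i$ for a chosen simple module $M = M_i$, I would take $c_k = (-1)^k / \prod_{j=1}^k (h^j - 1)$, which satisfies $c_0 = 1$, decays super-exponentially, and produces partial sums $\sum_{k=0}^r c_k \Surj_R(M^e, M^k)$ that are nonnegative for even $r$ and nonpositive for odd $r$ whenever $e > 0$, by the $q$-binomial telescoping argument of Lemma \ref{c-k-exists}. With these coefficients, the proof of Lemma \ref{one-variable} transfers verbatim (with $\Surj_{[H']}$ replaced by $\Surj_R$ throughout) to give the single-variable reconstruction: if $\sum_e \Surj_R(M^e, M^k) m(e) = O(O(1)^k)$ and these sums converge to the corresponding sums for $m$, then $m_t(0) \to m(0)$.

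Finally, I would prove the module analogue of Lemma \ref{inductive-step} by defining, for fixed $k_{j+1}, \ldots, k_m$,
\[m(e) = \sum_{e_{j+1}, \ldots, e_m} \Surj_R\Bigl(\prod_{i=j+1}^m M_i^{e_i}, \prod_{i=j+1}^m M_i^{k_i}\Bigr)\, \tilde{\mu}\Bigl(M_j^e \times \prod_{i=j+1}^m M_i^{e_i}\Bigr)\]
and analogously for $m_t$. The splitting from the first step rewrites the $(e_j, \ldots, e_m)$-moment sum for $\tilde{\mu}$ as $\sum_e \Surj_R(M_j^e, M_j^{k_j}) m(e)$, so the hypotheses of the single-variable reconstruction apply to $m$ and $m_t$ and yield the sum-over-$(e_{j+1}, \ldots, e_m)$ convergence. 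Iterating this step for $j = 1, \ldots, m$, where at each stage the required bound is obtained from \eqref{module-flat-bound} by setting $k_1 = \ldots = k_{j-1} = 0$ and invoking nonnegativity exactly as in the proof of Lemma \ref{flat-case}, collapses the sum down to $\tilde{\mu}_t(0) \to \tilde{\mu}(0)$.
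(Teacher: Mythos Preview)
Your proposal is correct and follows essentially the same approach as the paper: the paper's own proof simply says to repeat the proofs of Lemmas \ref{c-k-exists}, \ref{one-variable}, \ref{inductive-step}, and \ref{flat-case} verbatim with ``finite simple $R$-module'' in place of ``finite simple $[H']$-group'' and $\Surj_R$ in place of $\Surj_{[H']}$, ignoring the non-abelian case of Lemma \ref{c-k-exists}. Your write-up is in fact slightly more explicit than the paper in invoking Schur's lemma and Wedderburn's little theorem to identify $\operatorname{End}_R(M_i)$ with a finite field.
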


\begin{proof} The proof is almost word-for-word identical to the combination of the proofs of Lemmas \ref{c-k-exists}, \ref{one-variable}, \ref{inductive-step}, and \ref{flat-case}, except that we replace every occurrence of ``finite simple $[H']$-group" in those lemmas with ``finite simple $R$-module", we replace $\Surj_{[H']}$ with $\Surj_R$, and in Lemma \ref{c-k-exists} we ignore the non-abelian case. (The analogues of Lemmas \ref{abelian-surjection-count} and \ref{surjection-count-splitting} are immediate in this setting.) Hence we do not repeat the proof. \end{proof}

\begin{defi} For $M$ a finite $R$-module, and $\mu$ a measure on the set of isomorphism classes of finite $R$-modules, we define a measure $\mu^M$ on the set of $R/I$-modules $N$ by
\[ \mu^M(N) = \int \left|  \{ \pi\colon X \to M \textrm{ surjective} \mid (\ker \pi)/I \cong N \} \right| d\mu(X) .\] \end{defi} 

\begin{lemma}\label{module-measure-zero}  For $\mu$ a measure on the set of isomorphism classes of finite $R$-modules, we have
\[ \mu^M(0)= | \Aut_R(M)| \mu(H).\] 
\end{lemma}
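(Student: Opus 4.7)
The plan is to follow the blueprint of Lemma \ref{mu-H-formula} essentially verbatim, replacing the observation ``a surjection of groups is an isomorphism iff its kernel is trivial'' with its appropriate module-theoretic analogue. Expanding the definition, we have
\[ \mu^M(0) = \int \bigl| \{ \pi : X \to M \text{ surjective} \mid (\ker \pi)/I \cong 0 \} \bigr| \, d\mu(X), \]
so it suffices to show that for a surjection $\pi : X \to M$ of finite $R$-modules, the condition $(\ker \pi)/I = 0$ holds if and only if $\ker \pi = 0$, i.e., if and only if $\pi$ is an isomorphism.

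The nontrivial direction is that $(\ker \pi)/I = 0$ forces $\ker \pi = 0$. Writing $N = \ker \pi$, the hypothesis says $N = IN$. The key step---which I expect to be the only place one needs to think---is a Nakayama-type argument: any nonzero finite $R$-module $N$ has a simple quotient $N \twoheadrightarrow N/N'$, since one can take $N'$ to be any maximal proper submodule (which exists by finiteness). Every finite simple $R$-module is isomorphic to some $M_i$, hence by definition of $I$ is annihilated by $I$. Thus $IN \subseteq N'$, so $N/IN$ surjects onto the nonzero module $N/N'$, contradicting $N = IN$. Therefore $N = 0$, as desired.

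Once this is established, the indicator $|\{ \pi : X \to M \text{ surjective} \mid (\ker \pi)/I \cong 0 \}|$ counts isomorphisms $X \xrightarrow{\sim} M$, which vanishes unless $X \cong M$ and equals $|\Aut(M)|$ in that case. Integrating against $\mu$ gives $|\Aut(M)|\, \mu(M)$, completing the proof. No serious obstacle is anticipated; the proof is essentially the same one-line argument as in Lemma \ref{mu-H-formula}, with the semisimplicity of $R/I$ supplying the elementary replacement for ``trivial kernel.''
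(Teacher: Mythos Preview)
Your proof is correct and follows the same approach as the paper. The paper's proof is terser: it simply asserts that the set $\{\pi: X \to M \text{ surjective} \mid (\ker\pi)/I \cong 0\}$ equals the set of isomorphisms $X \to M$, without spelling out the Nakayama-type step $(\ker\pi)/I = 0 \Rightarrow \ker\pi = 0$ that you supply; your added justification is correct and makes the argument self-contained.
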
  \begin{proof} A surjection $X \to M$ is an isomorphism if and only if its kernel is zero, so \ \[\left|\{ \pi\colon X \to M \textrm{ surjective} \mid  (\ker \pi)/I \cong 0 \} \right|\] is the number of isomorphisms of $X$ with $M$, and thus vanishes for $X \neq M$ and equals $|\Aut_R(M)|$ for $X= M$. \end{proof}

\begin{lemma}\label{module-surjection-formula} Let $X$ and $M$ be finite $R$-modules and fix $k_1,\dots, k_m \in \mathbb N$. Let $N = \prod_{i=1}^m M_i^{k_i}$. The number of pairs of a surjection $\pi\colon  X\to M$ and a surjection $f\colon \ker \pi/I \to N$ is equal to 
\[ \sum_{ \substack{0 \to  N \to M' \to M\to 0} }\frac{  \Surj_R(X, M') }{ | \Hom_R (M, N )| }\]  where the sum is over isomorphism classes of exact sequences of $R$-modules. \end{lemma}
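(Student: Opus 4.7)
The plan is to adapt the proof of Lemma \ref{surjection-formula} to the module setting, where the abelian nature of modules makes the bookkeeping strictly cleaner than in the group case. My first observation would be that specifying a surjection $f: (\ker\pi)/I \to N$ is equivalent to specifying an $R$-module surjection $f: \ker\pi \to N$: since $N = \prod_{i=1}^{m} M_i^{k_i}$ is a product of finite simple $R$-modules, each of which is annihilated by $I$ by definition of $I$, every $R$-homomorphism $\ker\pi \to N$ kills $I \cdot \ker\pi$ and so factors uniquely through $(\ker\pi)/I$. Thus the lemma is equivalent to the corresponding statement where $f$ is a surjection from $\ker\pi$ itself.

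Next, I would set up a pair of mutually inverse constructions, following Lemma \ref{surjection-formula}. In one direction, given an extension $0 \to N \xrightarrow{i} M' \xrightarrow{p} M \to 0$ together with a surjection $u: X \to M'$, define $\pi = p \circ u$ and take $f$ to be the restriction $u|_{\ker\pi}$; this lands in $\ker p = N$, and surjectivity holds because for any $n \in N \subseteq M'$ the surjectivity of $u$ produces some $x \in X$ with $u(x) = n$, which lies in $\ker\pi$ since $\pi(x) = p(n) = 0$. In the other direction, given a pair $(\pi, f)$, set $M' = X/\ker f$; since $\ker f \subseteq \ker\pi$, the map $\pi$ descends to a surjection $p: M' \to M$, and $f$ descends to an isomorphism $(\ker\pi)/\ker f \cong N$ which I take as the inclusion $N \hookrightarrow M'$, while $u$ is the quotient map $X \to X/\ker f$. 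Checking that these are mutually inverse up to canonical isomorphism of extensions is immediate from the observation that $\ker u = \ker f$.

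The last and most delicate step is identifying the redundancy. For a fixed isomorphism class of extension, two surjections $u, u': X \to M'$ produce the same pair $(\pi, f)$ if and only if $u' = \phi \circ u$ for some automorphism $\phi$ of $M'$ that is the identity on $N$ and induces the identity on $M$. Such automorphisms are exactly the maps of the form $\operatorname{id}_{M'} + i \circ h \circ p$ with $h \in \operatorname{Hom}_R(M, N)$, so the group of automorphisms of the extension is canonically $\operatorname{Hom}_R(M, N)$, and it acts freely on $\Surj_R(X, M')$ since $u$ is surjective. Each isomorphism class of extension therefore contributes $\Surj_R(X, M')/|\operatorname{Hom}_R(M, N)|$ pairs $(\pi, f)$, and summing over isomorphism classes yields the identity. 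This automorphism-group identification is the module-theoretic analogue of the $\Aut_{F,H}(G)$-bookkeeping that was the main technical content of Lemma \ref{surjection-formula}; once it is in place, the rest of the argument is formal.
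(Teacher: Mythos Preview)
Your proof is correct and follows essentially the same route as the paper: reduce from $(\ker\pi)/I$ to $\ker\pi$, set up the two mutually inverse constructions between pairs $(\pi,f)$ and pairs (extension, surjection $u$), and then divide by the automorphism group of the extension, which you correctly identify with $\operatorname{Hom}_R(M,N)$ acting freely. Your write-up is in fact a bit more explicit than the paper's in justifying that extension automorphisms are exactly the maps $\operatorname{id}_{M'} + i\circ h\circ p$, but the structure of the argument is identical.
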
 

\begin{proof}  Because $N$ is an $R/I$-module, surjections $f\colon \ker \pi/I \to N$ are in bijection with surjections $f\colon \ker \pi \to N$, which we will use for the remainder of the proof.

We check that there is a bijection between (isomorphism classes of) pairs of a surjection $\pi\colon X\to M$ and  $f\colon \ker \pi \to N $ and (isomorphism classes of) pairs of an exact sequence $0 \to  N \to M' \to M\to 0$ with a surjection $X\to M'$. We will then deduce the counting formula.

Given surjections $\pi\colon X\to M$ and  $f\colon \ker \pi \to N $, we can define $M' = X/ (\ker f)$. The filtration $(\ker \pi) \subseteq (\ker f) \subseteq X$ gives an exact sequence $0 \to N \to M' \to M \to 0$ where we use $M = X/(\ker \pi)$ and $N= (\ker \pi)/(\ker f)$.  We then have a surjection $X \to M' $ because $M'$ is a quotient of $X$.  

Conversely, given an exact sequence $0 \to  N \to M' \to M\to 0$ and a surjection $X \to M'$, the composition $X \to M' \to M$ is a surjection, and its kernel is $X \times_M 0 = X\times_{M'} (M' \times_M 0) = X \times_{M'} N $ and thus surjects onto $N$. 

It is not too hard to check that these operations are inverse.

Finally, note that the number of isomorphism classes of pairs of an exact sequence $0 \to  N \to M' \to M\to 0$ with a surjection $X\to M'$ is equal to the sum over isomorphism classes of exact sequences of the number of surjections $X \to M'$, up to automorphisms of that exact sequence. The automorphisms of the exact sequence are $\Hom_R(M, N)$, and they act freely on surjections, so we simply divide the count by $|\Hom_R(M, N)|$. \end{proof}

\begin{lemma}\label{module-surjection-integral-formula} For $\mu$ a measure on the set of isomorphism classes of finite $R$-modules, $M$ a finite $R$-module, and $N = \prod_{i=1}^m M_i^{k_i}$, we have
\[ \int \Surj_{R}  (S,N )  d\mu^M (S) =  \sum_{ 0 \to N \to M' \to M \to 0} \int  \Surj_{R} (X, M')    d\mu(X) / |\Hom_R (M,N)| .   \] \end{lemma}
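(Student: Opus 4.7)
The plan is to mirror the proof of Lemma \ref{surjection-integral-formula}, substituting the module version of the ingredients and being careful about the finiteness that lets us swap sum and integral.

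First I would unfold the definition of $\mu^M$. Using that $N$ is an $R/I$-module so that surjections $(\ker\pi)/I \to N$ biject with surjections $\ker\pi \to N$, I get
\[ \int \Surj_R(S,N) \, d\mu^M(S) = \int \sum_{\pi : X \to M \text{ surjective}} \Surj_R(\ker \pi, N) \, d\mu(X). \]
The inner integrand counts pairs $(\pi,f)$ with $\pi:X\to M$ surjective and $f:\ker\pi\to N$ surjective, which is exactly the quantity computed by Lemma \ref{module-surjection-formula}. Substituting the formula there gives
\[ \int \Surj_R(S,N) \, d\mu^M(S) = \int \sum_{0 \to N \to M' \to M \to 0} \frac{\Surj_R(X, M')}{|\Hom(M,N)|} \, d\mu(X). \]

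The remaining step is to exchange the sum and the integral. For this I need the sum over isomorphism classes of extensions $0 \to N \to M' \to M \to 0$ to be finite. This is exactly where the hypothesis on $R$ enters: since $\Ext^1_R$ between two finite $R$-modules is assumed finite, there are only finitely many isomorphism classes of such extensions (each one is an element of the finite set $\Ext^1_R(M,N)$, modulo the group $\Hom(M,N)$ of equivalences of extensions, though we only need finiteness here). With this finiteness in hand, Fubini/Tonelli applies, and I can swap to obtain
\[ \int \Surj_R(S,N) \, d\mu^M(S) = \sum_{0 \to N \to M' \to M \to 0} \int \Surj_R(X, M') \, d\mu(X) / |\Hom(M,N)|, \]
which is the desired identity.

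The main obstacle (such as it is) is the sum/integral exchange: we need the finiteness of extension classes to license it, and this is the place where the structural assumption on $R$ is used, playing the same role that Lemma \ref{extension-counting-bound} plays in the group setting. Everything else is bookkeeping: the bijection between surjections from $(\ker\pi)/I$ and from $\ker\pi$ onto an $R/I$-module is immediate, and Lemma \ref{module-surjection-formula} packages the pair-counting identity cleanly.
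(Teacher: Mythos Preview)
Your proof is correct and follows essentially the same three-step approach as the paper: unfold the definition of $\mu^M$, apply Lemma \ref{module-surjection-formula}, then exchange sum and integral. The only difference is in justifying the exchange: you invoke the finiteness of $\Ext^1_R(M,N)$ (which is not a standing hypothesis of this section, only of Theorem \ref{main-bound-algebra}), whereas the paper simply observes that the terms are nonnegative and applies Tonelli, avoiding any reliance on the $\Ext^1$ assumption at this stage.
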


\begin{proof} We have \[ \int \Surj_{R}  (S,N )  d\mu^M (S)= \int  \sum_ {  \pi \colon X \to M \textrm{ surjective}}  \Surj_R( (\ker \pi)/I, N )   d\mu(X)  \] 
\[ = \int   \sum_{ 0 \to N \to M' \to M\to 0 }  \frac{  \Surj_{R} (X, M') }{  |\Hom_R(M,N)| }  d\mu(X)   = \sum_{ 0 \to N \to M' \to M\to 0 }    \int  \Surj_{R} (X, M')    d\mu(X) /|\Hom_R(M,N)| \] where the first identity is by definition, the second is Lemma \ref{module-surjection-formula}, and the third exchanges the integral with a sum of nonnegative functions. 
\end{proof}

 \begin{theorem}\label{main-bound-algebra} Let $R$ be an associative algebra. Assume that there are finitely many isomorphism classes of finite simple $R$-modules and that $\operatorname{Ext}^1$-groups between finite $R$-modules are finite.
 
 Let $\mu$ be a measure on the set of isomorphism classes of finite $R$-modules. Let $\mu_t$ be a sequence of measures on the same set. Assume that, for $M$ a finite $R$-module,
 \begin{equation}\label{algebra-limit}\lim_{t \to \infty} \int \Surj_R (X, M) d\mu_t(X)  = \int \Surj_R(X, M) d\mu(X) .\end{equation}
and
\begin{equation}\label{algebra-bound} \int \Surj_R(X, M) d\mu(X)  = O (   |M|^{O(1) }) \end{equation}
Then for all finite $R$-modules $M$ we have
\begin{equation}\label{algebra-conclusion} \lim_{t \to \infty} \mu_t(M) = \mu(M) .\end{equation}

\end{theorem}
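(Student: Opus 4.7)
The plan is to mirror the proof of Theorem \ref{main-bound-group} almost word for word, using the module-theoretic analogues of each lemma developed in this section. Given a finite $R$-module $M$, I would work with the localized measures $\mu^M$ and $\mu_t^M$ on the set of isomorphism classes of $R/I$-modules, verify that they satisfy the hypotheses of Lemma \ref{module-flat-case}, and conclude that $\lim_{t\to\infty} \mu_t^M(0) = \mu^M(0)$. Dividing by $|\Aut(M)|$ via Lemma \ref{module-measure-zero} then yields the desired \eqref{algebra-conclusion}.

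To verify the limit hypothesis \eqref{module-flat-limit}, I would apply Lemma \ref{module-surjection-integral-formula} to both $\mu$ and $\mu_t$, writing the moment at $N = \prod_{i=1}^m M_i^{k_i}$ as
\[ \int \Surj_R(S, N) \, d\mu^M(S) = \sum_{0 \to N \to M' \to M \to 0} \int \Surj_R(X, M') \, d\mu(X) \Big/ |\Hom(M, N)|, \]
and similarly for $\mu_t$. The hypothesis \eqref{algebra-limit} controls each individual integral, so the only additional input needed is that the outer sum is finite. For this I use that isomorphism classes of extensions $0 \to N \to M' \to M \to 0$ are parametrized by $\Ext^1_R(M, N)$, which is finite by the standing assumption on $R$; the sum is therefore finite and the exchange of limit and sum is justified.

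For the bound \eqref{module-flat-bound}, the same formula together with \eqref{algebra-bound} gives each integral a bound $O(|M'|^{O(1)})$, and since $|M'| = |M| \cdot |N|$ with $|N| = \prod_i |M_i|^{k_i}$, each term contributes $O(O(1)^{\sum_i k_i})$ once $M$ is fixed. To control the number of extensions I would use additivity of $\Ext^1_R(M, -)$ in its second argument to write $|\Ext^1_R(M, N)| = \prod_i |\Ext^1_R(M, M_i)|^{k_i}$, which is again $O(O(1)^{\sum_i k_i})$ because each factor is finite. Combining these estimates yields the required bound, and with both hypotheses of Lemma \ref{module-flat-case} in hand the argument concludes as sketched. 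The main obstacle, such as it is, lies entirely in ensuring that the sum over extensions is simultaneously finite and polynomially bounded in $\sum_i k_i$; this is precisely what the two finiteness hypotheses on $R$ are there to supply, and beyond that the proof is a purely formal translation of Theorem \ref{main-bound-group}, with $\Ext^1_R$-based bounds replacing the ad-hoc cocycle counting of Lemma \ref{extension-counting-bound}.
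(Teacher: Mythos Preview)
Your proposal is correct and follows the paper's proof essentially step for step: you verify the two hypotheses of Lemma~\ref{module-flat-case} for $\mu^M$ and $\mu_t^M$ via Lemma~\ref{module-surjection-integral-formula}, using finiteness of $\Ext^1_R(M,N)$ to swap the limit with the finite sum and the additivity $|\Ext^1_R(M,N)|=\prod_i |\Ext^1_R(M,M_i)|^{k_i}$ together with $|M'|=|M|\,|N|$ for the growth bound, then conclude via Lemma~\ref{module-measure-zero}. This is exactly the paper's argument (your $\mu^M(0)$ is in fact the more careful notation, matching Lemma~\ref{module-measure-zero}).
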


\begin{proof}

Let us check that the hypotheses of Lemma \ref{module-flat-case} apply to the measures $\mu^M$ and $\mu^M_t$.

Let $N = \prod_{i=1}^n M_i^{k_i}$ for some tuple $k_1,\dots, k_n$ of nonnegative integers. By applying Lemma \ref{module-surjection-integral-formula} to $\mu$ and $\mu_t$, we have 
\[\lim_{t \to \infty}  \int \Surj_{R}  (S, N)  d\mu_t^M (S)  = \lim_{t\to \infty} \sum_{ 0 \to N \to M'\to M \to 0} \int  \Surj_R (X, M')    d\mu_t(X) /|\Hom_R(M,N)| \]\[ =  \sum_{ 0 \to N \to M'\to M \to 0} \lim_{t\to \infty} \int   \Surj_R (X, M')    d\mu_t(X) /|\Hom_R(M,N)| \]\[=  \sum_{ 0 \to N \to M'\to M \to 0}  \int \Surj_R (X, M')    d\mu(X) /|\Hom_R(M,N) |= \int \Surj_{R}  (S, N)  d\mu^M (S) \] where we use the assumed finiteness of $\operatorname{Ext}^1$ to exchange the sum with a limit and also the assumed \eqref{algebra-limit} to calculate the limit. This verifies the assumption  \eqref{module-flat-limit} of Lemma \ref{module-flat-case}.

By Lemma \ref{module-surjection-integral-formula} and assumption \eqref{algebra-bound}  we have\[\int \Surj_{R}  (S, N) d \mu^M (S) =   \sum_{ 0 \to N \to M'\to M \to 0}  \int \Surj_R (X, M')    d\mu(X) /|\Hom_R(M,N) \] \begin{equation}\label{module-sum-to-bound}=  \sum_{ 0 \to N \to M'\to M \to 0}  O (|M'|^{O(1)}) /|\Hom_R(M,N)| .   \end{equation}

For $N = \prod_{i=1}^m M_i^{k_i}$,  \[|M'| = |M| |N| =|M| \prod_{i=1}^{m} |M_i|^{k_i} = O \left( O(1)^{\sum_{i=1}^m k_i}\right),\] and the number of terms is \[| \Ext^1( M,N) | = \prod_{i=1}^m |\Ext^1 (M, M_i)|^{k_1} = O \left( O(1)^{\sum_{i=1}^m k_i }\right),\] so the total sum \eqref{module-sum-to-bound} is $O \left( O(1)^{\sum_{i=1}^m k_i}\right)$, giving the assumption \eqref{module-flat-bound} of \eqref{module-flat-case}.

So we may apply Lemma \ref{module-flat-case}, obtaining \[ \lim_{t \to \infty} \mu^M_t(0) = \mu^M(0), \] which by Lemma \ref{module-measure-zero}, applied to both $\mu$ and $\mu_t$, gives our desired \eqref{algebra-conclusion}.  \end{proof}

\end{document}